\DeclareSymbolFont{fouriersymbols}{FMS}{futm}{m}{n}
\DeclareSymbolFont{fourierlargesymbols}{FMX}{futm}{m}{n}
\DeclareMathDelimiter{\VERT}{\mathord}{fouriersymbols}{152}{fourierlargesymbols}{147}
\newcommand{\ph}{\varphi}
\newcommand{\NN}{\mathbb{N}}
\newcommand{\PP}{\mathbb{P}}
\newcommand{\dd}{\mathop{}\!\mathrm{d}}
\newcommand{\subalign}[1]{%
  \vcenter{%
    \Let@ \restore@math@cr \default@tag
    \baselineskip\fontdimen10 \scriptfont\tw@
    \advance\baselineskip\fontdimen12 \scriptfont\tw@
    \lineskip\thr@@\fontdimen8 \scriptfont\thr@@
    \lineskiplimit\lineskip
    \ialign{\hfil$\m@th\scriptstyle##$&$\m@th\scriptstyle{}##$\hfil\crcr
      #1\crcr
    }%
  }%
}
\newtheorem{theorem}{Theorem}[section]
\newtheorem{corollary}[theorem]{Corollary}
\newtheorem{lemma}[theorem]{Lemma}
\newtheorem{proposition}[theorem]{Proposition}
\theoremstyle{definition}
\newtheorem{definition}[theorem]{Definition}
\newtheorem{remark}[theorem]{Remark}
\title{On Robin's Inequality and the Kaneko-Lagarias Inequality}
\author{Idris Assani}
\address{Department of Mathematics, The University of North Carolina at Chapel Hill, 120 E Cameron Avenue, CB 3250
Chapel Hill, NC 27599-3250, USA}
\email{assani@email.unc.edu}
\urladdr{https://idrisassani.web.unc.edu/}
\author{Aiden Chester}
\address{The University of North Carolina at Chapel Hill, 120 E Cameron Avenue, CB 3250
Chapel Hill, NC 27599-3250, USA}
\email{achester@unc.edu}
\author{Alex Paschal}
\address{The University of North Carolina at Chapel Hill, 120 E Cameron Avenue, CB 3250
Chapel Hill, NC 27599-3250, USA}
\email{ampasch@unc.edu}
\subjclass[2020]{11N56, 11M26}
\keywords{Robin's inequality, Kaneko-Lagarias inequality}
\begin{document}
\begin{abstract}
    
    We provide new, elementary proofs that Robin's inequality and the Lagarias inequality hold for almost every number, including all numbers not divisible by one of the prime numbers $2$, $3$,  $5$; all primorials; given $k$ a natural number, all sufficiently large numbers of the form $2^kn$ for $n\ge1$ odd; and all $21$-free integers. Additionally, we prove that the Kaneko-Lagarias inequality holds for all natural numbers if and only if it holds for all superabundant numbers.

\end{abstract}
\maketitle

\tableofcontents
\section{Introduction}

We define the \emph{sum of divisors function} and \emph{Euler's totient function} as
\begin{equation*}
    \sigma(n)=\sum_{d\mid n}d,\quad\ph(n)=n\prod_{p\mid n}\bigg(1-\frac 1p\bigg)
\end{equation*}
respectively, where the product is taken over primes $p$ which divide $n$. It has long been known that these functions have connections to the Riemann hypothesis (RH). Robin's inequality \cite{Robin} states that RH holds if and only if, for all $n>5040$,
\begin{equation}\label{Robin's inequality}
    \sigma(n)<e^\gamma \log(\log (n)),
\end{equation}
where $\gamma\approx .57721\dots$ denotes the Euler-Mascheroni constant.

We briefly survey some known results concerning families of natural numbers which satisfy Robin's inequality. A number is $k$-free if all the powers in its prime factorization are $<k$. In \cite{Choieetal}, it is proven that all all odd integers $>9$ satisfy Robin's inequality, as do all $5$-free integers. This was extended to $7$-free numbers in \cite{SolePlanat}, $11$-free numbers in \cite{BroughanTrudgian}, $20$-free numbers in \cite{MorrillandPlatt}, and finally $21$-free numbers in \cite{Axler23}. We reprove the result of Axler \cite[Theorem 1]{Axler23} using more elementary methods in \cref{21-free numbers}: while Axler's proof relies on estimates from \cite{SolePlanat} and combinatorial prime counting algorithms, our proof uses only arithmetic manipulations and a sharper bound from \cite{AxlerNicolas} than the one used in \cite[Theorem 15]{RosserShoenfeld}.

Similarly, \cref{Robin's inequality for numbers not divisible by p_j}, which states that Robin's inequality holds for numbers not divisible by one of the primes $2$, $3$, $5$, is implied by the results of \cite{Hertlein16}, but is proven using more elementary methods. The proof in \cite{Hertlein16}, which works with a number's \emph{$p$-adic order}, relies on an algorithm from \cite{AFJ07}, whereas ours is derived only from arithmetic manipulations.

The final class of results concerning Robin's inequality is density results. The first such result is by Robin \cite{Robin}, who proved that Robin's inequality holds for all square-free (that is, $2$-free) numbers. In \cite[p. 46]{Tenenbaum}, it is shown that the logarithmic density of non-square-free integers is $\frac 12-\frac{2}{\pi^2}\approx .2973...$. Similarly, \cref{Robin's inequality for numbers not divisible by p_j} shows that Robin's inequality holds for a set of logarithmic density $\frac{29}{30}$. Wójtowicz \cite{Wojtowicz07} was the first to show that Robin's inequality holds on a set of density $1$. We prove this using different methods in \cref{almost every number}: our proof, again, relies mostly on arithmetic manipulations, as opposed to the ``deep'' results of Ford and Luca-Pomerance. It is worth noting that according to \cite{Luca}, for $x>7!$,
\begin{equation}
    \#\{n \le x \mid \sigma(n) \ge e^{\gamma}n\log(\log(n)) \}=x^{O(\frac{1}{\log(\log(x))})},
\end{equation}
so the density of counterexamples to Robin's inequality up to large but finite $x$ is also quite small.

Another well-studied inequality which is equivalent to RH is the Lagarias inequality. Denote by $H_n$ the $n$-th \emph{harmonic number}; that is, $H_n=1+\frac 12+\cdots+\frac 1n$. The Lagarias inequality \cite{Lagarias} states that, for all $n\geq 1$,
\begin{equation*}
    \sigma(n)<H_n+\exp(H_n)\log(H_n).
\end{equation*}
It turns out that the $H_n$ term on the RHS is negligible in the sense that the following inequality, which we name the Kaneko-Lagarias inequality (see the acknowledgements in \cite{Lagarias}), is also equivalent to RH: for all $n>60$,
\begin{equation*}
    \sigma(n)<\exp(H_n)\log(H_n).
\end{equation*}
We note that similar alternative inequalities have been introduced \cite{WashingtonYang21}.

A number is \emph{superabundant} if $m<n$ implies $\sigma(m)/m<\sigma(n)/n$. By dividing both sides of \eqref{Robin's inequality} by $n$ and noting that the RHS is monotone increasing, one immediately sees that Robin's inequality holds if and only if it holds for superabundant numbers (this observation was made in \cite{AkbaryFriggstad}). One would like to say the same for the Lagarias inequality and the Kaneko-Lagarias inequality, but the picture is more complicated since monotonicity is harder to prove. Nevertheless, we prove in \cref{LLI theorem} that the Kaneko-Lagarias inequality holds if and only if it holds for superabundant numbers. We would like to extend this result to the Lagarias inequality in future work.

The layout of our paper is as follows. In \cref{Robin's inequality sec}, we consider Robin's inequality. Each subsection corresponds to a result which we prove, and is labeled as such. We note that we use a unified method throughout the section, reobtaining some results which were found using varied methods. In \cref{Lagarias ineq sec}, we focus on the Lagarias inequality. We introduce the Kaneko-Lagarias inequality and prove the contents of the last paragraph.

\section{Robin's Inequality}\label{Robin's inequality sec}

\subsection{Sufficiently big numbers not divisible by one of the prime numbers 2,3,5}

Let $p_1=2$, $p_2=3$, etc. be an enumeration of the prime numbers which we denote by $\PP$. Fix $j\in\NN$ and let $q_1<q_2<\cdots<q_k$ be some prime numbers distinct from $p_j$. Given $\alpha_1,\alpha_2,\dots,\alpha_k\in\NN$, let $n=q_1^{\alpha_1}q_2^{\alpha_2}\cdots q_k^{\alpha_k}$.

\begin{lemma}\label{sigma(n)/n<n/phi(n)} %\label{initial bound on sigma(n)/n 2}
    We have
    \begin{equation}\label{first bound on sigma(n)/n}
        \frac{\sigma(n)}{n}<\prod_{\ell=1}^k\frac{q_\ell}{q_\ell -1}\leq\prod_{\subalign{\ell=&1,\dots,j-1\\&j+1,\dots,k+1}}\frac{p_\ell}{p_\ell-1}=\frac{n}{\ph(n)}.
    \end{equation}
\end{lemma}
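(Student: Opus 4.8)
The plan is to prove the chain in three essentially independent pieces. For the strict inequality $\frac{\sigma(n)}{n}<\prod_{\ell=1}^k\frac{q_\ell}{q_\ell-1}$, I would use multiplicativity of $\sigma(n)/n$ to write it as $\prod_{\ell=1}^k \frac{\sigma(q_\ell^{\alpha_\ell})}{q_\ell^{\alpha_\ell}}$, and then note that for any prime $q$ and any $\alpha\geq1$,
\[
\frac{\sigma(q^{\alpha})}{q^{\alpha}}=\sum_{i=0}^{\alpha}q^{-i}=\frac{1-q^{-(\alpha+1)}}{1-q^{-1}}<\frac{1}{1-q^{-1}}=\frac{q}{q-1}.
\]
Since every factor is positive and strictly smaller than the corresponding $\frac{q_\ell}{q_\ell-1}$, the product is strictly smaller; equivalently, $\sigma(n)/n=\sum_{d\mid n}1/d$ is a proper partial sum of the convergent Euler product $\prod_\ell\sum_{i\geq0}q_\ell^{-i}=\prod_\ell\frac{q_\ell}{q_\ell-1}$. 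The final equality $\prod_{\ell=1}^k\frac{q_\ell}{q_\ell-1}=\frac{n}{\ph(n)}$ is then just the definition of the totient: $\frac{n}{\ph(n)}=\prod_{p\mid n}\bigl(1-\tfrac1p\bigr)^{-1}=\prod_{p\mid n}\frac{p}{p-1}$, and the primes dividing $n$ are exactly $q_1,\dots,q_k$.

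The middle inequality $\prod_{\ell=1}^k\frac{q_\ell}{q_\ell-1}\leq\prod_{\ell\in\{1,\dots,k+1\}\setminus\{j\}}\frac{p_\ell}{p_\ell-1}$ is the only step that needs a moment's care. The point is that $x\mapsto\frac{x}{x-1}=1+\frac{1}{x-1}$ is strictly decreasing on $(1,\infty)$, so a product of $k$ factors $\frac{q}{q-1}$ over distinct primes $q\neq p_j$ is maximized by taking the $k$ smallest admissible primes, which are precisely $p_1,\dots,p_{j-1},p_{j+1},\dots,p_{k+1}$. To make this precise, let $r_1<r_2<\cdots$ enumerate $\PP\setminus\{p_j\}$ (so $r_\ell=p_\ell$ for $\ell<j$ and $r_\ell=p_{\ell+1}$ for $\ell\geq j$); since $\{q_1<\cdots<q_k\}$ is a $k$-element subset of $\{r_1<r_2<\cdots\}$, at least $\ell$ of the $r$'s lie below or at $q_\ell$, so $q_\ell\geq r_\ell$ for each $\ell$. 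Monotonicity gives $\frac{q_\ell}{q_\ell-1}\leq\frac{r_\ell}{r_\ell-1}$, and multiplying over $\ell=1,\dots,k$ yields the bound, since $\prod_{\ell=1}^k\frac{r_\ell}{r_\ell-1}=\prod_{\ell\in\{1,\dots,k+1\}\setminus\{j\}}\frac{p_\ell}{p_\ell-1}$.

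I do not expect a real obstacle: the strict inequality and the totient identity are routine, and the middle inequality reduces to monotonicity of $\frac{x}{x-1}$ together with the elementary observation that restricting to a subset can only increase the $\ell$-th smallest element. The one thing to watch is the bookkeeping in the reindexing $r_\ell\leftrightarrow p_\ell$ after deleting $p_j$, and in particular nailing down the inequality $q_\ell\geq r_\ell$ uniformly in $\ell$; everything else is a direct computation.
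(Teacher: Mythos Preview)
Your proof is correct and follows the paper's approach: the strict inequality comes from the factor-by-factor bound $\sigma(q^{\alpha})/q^{\alpha}<q/(q-1)$, and the middle inequality from the decrease of $x\mapsto x/(x-1)$ together with $q_\ell\ge r_\ell$ (the paper states this tersely as ``$p_i\le q_i$'', which your reindexing via $r_\ell$ makes precise). One cosmetic point: you identify $n/\ph(n)$ with the \emph{first} product $\prod_{\ell=1}^k q_\ell/(q_\ell-1)$---which is what the definition of $\ph$ actually gives for $n=\prod_\ell q_\ell^{\alpha_\ell}$---whereas the paper writes the equality at the end of the chain, evidently using ``$n/\ph(n)$'' as a name for that upper bound in the estimates that follow.
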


\begin{proof}
    The first inequality follows from the fact that for any $p\in\PP$ and $\alpha\in\NN$
    \begin{equation}
        \frac{\sigma(p^\alpha)}{p^\alpha}=\frac{p-\frac{1}{p^\alpha}}{p-1}\nearrow\frac{p}{p-1}\text{ as }\alpha\to\infty.
    \end{equation}
    The second inequality follows from the fact that $p_i\leq q_i$ for all $1\leq i\leq k$.
\end{proof}

Note that
\begin{equation}\label{n/phi(n)=A(k)B(k)} %\label{equality for product of primes}
    \frac{n}{\ph(n)}=\left(\prod_{\subalign{\ell=&1,\dots,j-1\\&j+1,\dots,k+1}}\frac{p_\ell+1}{p_\ell}\right)\left(\prod_{\subalign{\ell=&1,\dots,j-1\\&j+1,\dots,k+1}}\frac{p_\ell^2}{p_\ell^2-1}\right)=:A(k)B(k).
\end{equation}
We can bound $A(k)$ as follows:
\begin{equation}
    \log(A(k))=\sum_{\subalign{\ell=&1,\dots,j-1\\&j+1,\dots,k+1}}\log\left(1+\frac{1}{p_\ell}\right)\leq\sum_{\subalign{\ell=&1,\dots,j-1\\&j+1,\dots,k+1}}\frac{1}{p_\ell}=\left(\sum_{\ell=1}^{k+1}\frac{1}{p_\ell}\right)-\frac{1}{p_j}
\end{equation}
and
\begin{equation}
    \sum_{\ell=1}^{k+1}\frac{1}{p_\ell}\leq\log(\log(p_{k+1}))+c_1+\frac{5}{\log(p_{k+1})},
\end{equation}
where $c_1\approx .261497$ by Theorem 1.10 in \cite{Tenenbaum}. Thus we obtain
\begin{lemma}\label{bound on A(k)}
    For all $k\in\NN$,
    \begin{equation}\label{bound on A(k) eq}
        A(k)\leq\log(p_{k+1})\exp\left(c_1-\frac{1}{p_j}+\frac{5}{\log(p_{k+1})}\right).
    \end{equation}
\end{lemma}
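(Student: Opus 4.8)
The plan is to exponentiate the estimate on $\log(A(k))$ that has already been assembled in the discussion preceding the statement, so the lemma amounts to bookkeeping. Starting from the definition of $A(k)$ in \eqref{n/phi(n)=A(k)B(k)}, I would write $\log(A(k))=\sum_{\ell}\log\!\left(1+\tfrac{1}{p_\ell}\right)$, the sum ranging over $\ell\in\{1,\dots,j-1,j+1,\dots,k+1\}$, and apply the elementary inequality $\log(1+t)\le t$, valid for all $t\ge 0$, term by term. This bounds $\log(A(k))$ by $\sum_\ell\tfrac{1}{p_\ell}$; since the index set is $\{1,\dots,k+1\}$ with $j$ deleted, that sum equals $\left(\sum_{\ell=1}^{k+1}\tfrac{1}{p_\ell}\right)-\tfrac{1}{p_j}$.

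Next I would insert the explicit Mertens-type estimate $\sum_{\ell=1}^{k+1}\tfrac{1}{p_\ell}=\sum_{p\le p_{k+1}}\tfrac{1}{p}\le\log(\log(p_{k+1}))+c_1+\tfrac{5}{\log(p_{k+1})}$ furnished by Theorem 1.10 of \cite{Tenenbaum}, where $c_1\approx .261497$ is the Mertens constant. Combining the two bounds gives $\log(A(k))\le\log(\log(p_{k+1}))+c_1-\tfrac{1}{p_j}+\tfrac{5}{\log(p_{k+1})}$, and applying $\exp(\cdot)$ to both sides, using $\exp(\log(\log(p_{k+1})))=\log(p_{k+1})$, produces exactly \eqref{bound on A(k) eq}.

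There is no genuine obstacle here. The only points to keep straight are that the index deleted from $\{1,\dots,k+1\}$ is precisely $\ell=j$, so that exactly the term $\tfrac{1}{p_j}$ is subtracted — this is already encoded in the notation of \eqref{n/phi(n)=A(k)B(k)} — and that the finite sum $\sum_{\ell=1}^{k+1}\tfrac{1}{p_\ell}$ is literally $\sum_{p\le p_{k+1}}\tfrac{1}{p}$, so the cited explicit bound applies verbatim. All of the analytic substance is carried by that prime-reciprocal estimate; the rest is algebra.
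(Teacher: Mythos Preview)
Your proposal is correct and follows exactly the paper's approach: the two displayed estimates immediately preceding the lemma---$\log(1+t)\le t$ applied termwise, followed by the Mertens-type bound from \cite[Theorem 1.10]{Tenenbaum}---constitute the entire argument, and exponentiating is the only remaining step.
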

\noindent Combining \cite{Dusart} and Theorem 3 from \cite{RosserShoenfeld}, we obtain the following:
\begin{theorem}\label{bounds on p_n}
    For $k\geq 6$,
    \begin{equation}\label{bounds on p_n eq}
        k(\log(k)+\log(\log(k))-1)<p_k<k(\log(k)+\log(\log(k))).
    \end{equation}
\end{theorem}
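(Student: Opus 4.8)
The plan is to obtain the two halves of \eqref{bounds on p_n eq} separately, each directly from one of the results quoted just above the statement, and then glue them together.

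For the lower bound, I would invoke Dusart's theorem \cite{Dusart}, which asserts precisely that $p_k > k(\log(k)+\log(\log(k))-1)$ for every $k\geq 2$. Since $6\geq 2$, this is already the left-hand inequality of \eqref{bounds on p_n eq} on the whole range $k\geq 6$, with nothing further to verify.

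For the upper bound, I would use Theorem 3 of Rosser and Schoenfeld \cite{RosserShoenfeld}, which supplies an upper estimate for $p_n$ of the form $p_n < n(\log(n)+\log(\log(n)))$ valid for $n\geq 6$ (Theorem 3 in fact contains a slightly sharper statement with an extra non-positive correction term, from which ours follows \emph{a fortiori} since $n>0$). Setting $n=k$ gives the right-hand inequality of \eqref{bounds on p_n eq} for $k\geq 6$.

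There is no genuine mathematical obstacle here: the content lies entirely in the two cited papers, and the proof amounts to quoting them. The only point requiring care is the bookkeeping of the ranges of validity. Dusart's lower bound holds from $k=2$ onward, whereas the Rosser--Schoenfeld upper bound is the binding constraint, valid only from $n=6$; hence the common range on which both hold simultaneously is $k\geq 6$, which is exactly the hypothesis of the theorem. This restriction is harmless for the subsequent sections, where only $p_{k+1}$ with $k$ large is needed.
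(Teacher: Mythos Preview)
Your proposal is correct and is exactly the approach the paper takes: the theorem is stated there without a detailed proof, merely as the result of combining Dusart's lower bound with Theorem~3 of Rosser--Schoenfeld, and your write-up spells out precisely this combination together with the range bookkeeping that forces the hypothesis $k\geq 6$.
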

\noindent Furthermore, combining \ref{bound on A(k)} and \ref{bounds on p_n}, we see that
\begin{lemma}
    For $k\geq 6$, $A(k)<C(k)$ where
    \begin{equation}\label{def of C(k)}
        \begin{aligned}
            C(k) &= \log((k+1)(\log(k+1)+\log(\log(k+1)))) \\
            &\qquad \exp\left(c_1-\frac{1}{p_j}+\frac{5}{\log((k+1)(\log(k+1)+\log(\log(k+1))-1))}\right).
        \end{aligned}
    \end{equation}
\end{lemma}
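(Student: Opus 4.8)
The plan is to feed the explicit bounds on $p_{k+1}$ from \cref{bounds on p_n} into the estimate \eqref{bound on A(k) eq} of \cref{bound on A(k)}, using that $x\mapsto\log x$ is increasing and $x\mapsto 5/\log x$ is decreasing on $(1,\infty)$. The one point requiring a moment's care is making sure we are on the ``right side'' of the singularity of $5/\log x$ at $x=1$.

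First I would note that $k\geq 6$ forces $k+1\geq 7\geq 6$, so \cref{bounds on p_n} applies with index $k+1$ and gives
\begin{equation*}
    L:=(k+1)\big(\log(k+1)+\log(\log(k+1))-1\big)<p_{k+1}<(k+1)\big(\log(k+1)+\log(\log(k+1))\big)=:U.
\end{equation*}
Next I would check that $L>1$: since $k+1\geq 7$ we have $\log(k+1)\geq\log 7>1.9$ and $\log(\log(k+1))\geq\log(\log 7)>0.6$, so $\log(k+1)+\log(\log(k+1))-1>1.5$ and hence $L>7\cdot 1.5>1$. Thus $1<L<p_{k+1}<U$, and in particular $0<\log L<\log p_{k+1}$ so that $\tfrac{5}{\log p_{k+1}}<\tfrac{5}{\log L}$.

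To conclude, in \eqref{bound on A(k) eq} the factor $\log p_{k+1}$ is at most $\log U$ by monotonicity of $\log$, while the factor $\exp\!\big(c_1-\tfrac1{p_j}+\tfrac{5}{\log p_{k+1}}\big)$ is at most $\exp\!\big(c_1-\tfrac1{p_j}+\tfrac{5}{\log L}\big)$ by the inequality just recorded. Both factors being positive, multiplying the two bounds yields $A(k)\leq\log p_{k+1}\exp\!\big(c_1-\tfrac1{p_j}+\tfrac{5}{\log p_{k+1}}\big)<\log(U)\exp\!\big(c_1-\tfrac1{p_j}+\tfrac{5}{\log L}\big)=C(k)$, the strictness coming from $p_{k+1}<U$. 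This is exactly the claimed inequality with $C(k)$ as in \eqref{def of C(k)}.

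I do not expect a genuine obstacle here: the only substantive step is the verification $L>1$, which is what legitimizes substituting the lower bound for $p_{k+1}$ into $5/\log(\cdot)$ (the function is unbounded and changes sign across $x=1$). Everything else is a direct monotone substitution into the bound of \cref{bound on A(k)}.
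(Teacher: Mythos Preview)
Your argument is correct and is precisely the combination the paper intends: you substitute the upper bound $p_{k+1}<U$ into the factor $\log p_{k+1}$ and the lower bound $p_{k+1}>L$ into the factor $5/\log p_{k+1}$ from \cref{bound on A(k)}, after checking $L>1$ so that the latter substitution is legitimate. This is exactly the paper's approach (it merely says ``combining \ref{bound on A(k)} and \ref{bounds on p_n}''), with the added care of justifying the monotonicity step.
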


Now, put $m=p_{k+1}\#/p_j$. Our goal is to show the following, since it implies that Robin's inequality for $n$ as above:
\begin{theorem}\label{C(k)<e^gamma log(log(m))}
    For any $j\in\{1,2,3\}$, there exists a $K_j\in\NN$ such that $k\geq K_j$ implies
    \begin{equation}
        C(k)B(k)<e^\gamma\log(\log(m)).
    \end{equation}
\end{theorem}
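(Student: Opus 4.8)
The plan is to reduce the stated inequality to a comparison of leading orders together with one fixed numerical inequality. Concretely, I would bound $C(k)B(k)$ from above and $e^\gamma\log\log m$ from below by explicit functions of $k$, observe that both are of the form $(\text{constant})\cdot\log k\,(1+o(1))$ as $k\to\infty$, and then check that the constant attached to $C(k)B(k)$ is strictly below $e^\gamma$ exactly when $p_j\in\{2,3,5\}$. Since $m=p_{k+1}\#/p_j$, it is convenient to record $\log m=\theta(p_{k+1})-\log p_j=\sum_{i=1}^{k+1}\log p_i-\log p_j$, where $\theta$ is Chebyshev's function.

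For the upper bound, first note that $B(k)$ is a product of factors exceeding $1$, hence strictly increasing in $k$, so for every $k$
\[
  B(k)<\prod_{p\in\PP,\ p\ne p_j}\frac{p^2}{p^2-1}=\frac{\zeta(2)(p_j^2-1)}{p_j^2}=\frac{\pi^2}{6}\cdot\frac{p_j^2-1}{p_j^2}.
\]
Writing $L(k)$ and $L'(k)$ for the two logarithmic expressions appearing in \eqref{def of C(k)}, \cref{bounds on p_n} gives $L(k),L'(k)=\log p_{k+1}+o(1)=(1+o(1))\log k$, and in particular both tend to $\infty$. Therefore
\[
  C(k)B(k)<\frac{\pi^2}{6}\cdot\frac{p_j^2-1}{p_j^2}\cdot L(k)\,e^{\,c_1-1/p_j}\,e^{\,5/L'(k)}=\Big(\tfrac{\pi^2}{6}\cdot\tfrac{p_j^2-1}{p_j^2}\,e^{\,c_1-1/p_j}+o(1)\Big)\log k ,
\]
the factor $e^{5/L'(k)}$ tending to $1$. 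One may also read this via Mertens: $C(k)B(k)$ overestimates $m/\ph(m)=A(k)B(k)$ by a bounded factor, and $m/\ph(m)\sim e^\gamma\tfrac{p_j-1}{p_j}\log p_{k+1}$ by Mertens' third theorem, so the theorem asserts that this overestimate still lies under $e^\gamma\log\log m$.

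For the lower bound a crude estimate is enough: among $p_1,\dots,p_{k+1}$ more than $k/3$ of the primes (those of index larger than $(k+1)/2$) exceed $k/2$, so $\log m$ exceeds a fixed positive multiple of $k\log k$ for all large $k$; hence $\log\log m\ge(1-o(1))\log k$ and $e^\gamma\log\log m\ge(e^\gamma+o(1))\log k$. If sharper constants were needed one would instead invoke a Rosser--Schoenfeld lower bound $\theta(x)\ge x(1-\epsilon(x))$ with $\epsilon(x)\to0$, or sum the lower bound of \cref{bounds on p_n} against Stirling's formula, obtaining $\log\log m=\log p_{k+1}+O(1/\log p_{k+1})$. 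Putting the two asymptotics together, the theorem reduces to the constant inequality
\[
  \frac{\pi^2}{6}\cdot\frac{p_j^2-1}{p_j^2}\,e^{\,c_1-1/p_j}<e^{\gamma},
\]
equivalently $\frac{\pi^2}{6}\frac{p_j^2-1}{p_j^2}<\exp(\gamma-c_1+\tfrac1{p_j})$, which, since $c_1$ is the Mertens constant with $c_1=\gamma+\sum_p(\log(1-\tfrac1p)+\tfrac1p)$, one checks numerically to hold for $p_j\in\{2,3,5\}$ --- with ample room for $p_j=2$ and only a few percent for $p_j=5$ --- and to fail for every $p_j\ge7$; this last point is precisely why the theorem is restricted to $j\in\{1,2,3\}$. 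Being strict, the displayed inequality holds with a fixed margin $\delta_j>0$, so choosing $K_j$ large enough that every $o(1)$ term above is smaller than $\delta_j$ finishes the proof.

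The only real difficulty is quantitative, and it sits in the case $j=3$: the margin $\delta_3$ there is small, so the slowly decaying errors --- chiefly the factor $e^{5/L'(k)}$ inherited from the Rosser--Schoenfeld tail used in \cref{bound on A(k)}, and secondarily the $(\log\log k)/\log k$ and $O(1/\log p_{k+1})$ losses --- must be pushed below a few percent, which forces $K_3$ to be very large. That is harmless for the statement, which only asserts existence of $K_j$; obtaining a $K_j$ in computationally verifiable range would require a sharper remainder term in the Mertens sum, or retaining the loss in $\log(1+\tfrac1p)\le\tfrac1p$ rather than discarding it as in \cref{bound on A(k)}.
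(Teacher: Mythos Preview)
Your proposal is correct and follows essentially the same route as the paper: bound $B(k)$ above by $\tfrac{\pi^2}{6}\cdot\tfrac{p_j^2-1}{p_j^2}$, bound $\log\log m$ below via a Chebyshev-type estimate, and reduce everything to the single numerical inequality $\tfrac{\pi^2}{6}\cdot\tfrac{p_j^2-1}{p_j^2}\,e^{c_1-1/p_j}<e^{\gamma}$, which is exactly the paper's condition $E_j>1$. The only difference is presentational: the paper makes each $o(1)$ term an explicit function of $k$ (introducing the auxiliary quantity $D(k)$ and the rewrites $\widetilde C(k),\widehat C(k)$) and carries out the comparison by concrete algebraic manipulation, whereas you phrase the same steps asymptotically.
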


\begin{corollary}
    Suppose \ref{C(k)<e^gamma log(log(m))} holds. Then Robin's inequality holds for $n$ as above.
\end{corollary}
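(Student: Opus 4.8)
The plan is to splice together the inequalities already proved in this subsection and then to trade the bound involving $m$ for the bound involving $n$ that Robin's inequality actually demands. Fix $j\in\{1,2,3\}$ and take $k\ge\max(K_j,6)$, so that both the lemma giving $A(k)<C(k)$ (valid for $k\ge6$, with $C(k)$ as in \eqref{def of C(k)}) and the hypothesis \cref{C(k)<e^gamma log(log(m))} are in force. First I would combine \cref{sigma(n)/n<n/phi(n)}, the identity \eqref{n/phi(n)=A(k)B(k)}, and the bound $A(k)<C(k)$ to get, for any $n=q_1^{\alpha_1}\cdots q_k^{\alpha_k}$ whose primes $q_\ell$ are all distinct from $p_j$,
\begin{equation*}
    \frac{\sigma(n)}{n}<A(k)B(k)<C(k)B(k),
\end{equation*}
where $m=p_{k+1}\#/p_j$ and $A(k)B(k)$ is the product appearing in \eqref{n/phi(n)=A(k)B(k)}. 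Invoking the assumed \cref{C(k)<e^gamma log(log(m))} then bounds the right-hand side by $e^\gamma\log(\log(m))$, so that $\sigma(n)/n<e^\gamma\log(\log(m))$.

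The step that does the real work is the observation that $m$ is the \emph{smallest} integer of the prescribed shape, so that one may replace $m$ by $n$ on the right. Indeed, since $q_1<q_2<\cdots<q_k$ are distinct primes all different from $p_j$, the prime $q_\ell$ is at least the $\ell$-th smallest prime different from $p_j$; as every exponent satisfies $\alpha_\ell\ge1$, this gives
\begin{equation*}
    n=\prod_{\ell=1}^k q_\ell^{\alpha_\ell}\ge\prod_{\ell=1}^k q_\ell\ge\prod_{\substack{1\le\ell\le k+1\\ \ell\ne j}}p_\ell=m.
\end{equation*}
Because $x\mapsto\log(\log(x))$ is increasing for $x>1$ and $m\to\infty$ as $k\to\infty$ (so in particular $m>1$, and indeed $n\ge m>5040$ once $k\ge K_j$), it follows that $\log(\log(m))\le\log(\log(n))$. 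Substituting into the previous bound yields $\sigma(n)/n<e^\gamma\log(\log(n))$, which is exactly Robin's inequality \eqref{Robin's inequality} for $n$.

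I do not expect a genuine obstacle here, since \cref{C(k)<e^gamma log(log(m))} supplies all the analytic content and the corollary is a short deduction. The only point requiring care is the index bookkeeping behind $n\ge m$: the $\ell$-th smallest prime different from $p_j$ is $p_\ell$ for $\ell<j$ and $p_{\ell+1}$ for $\ell\ge j$, which is precisely why $\prod_{\ell=1}^k(\ell\text{-th prime}\ne p_j)$ collapses to the product $\prod_{1\le\ell\le k+1,\,\ell\ne j}p_\ell=m$ featured in \eqref{n/phi(n)=A(k)B(k)}. Once $n\ge m$ is established, monotonicity of $\log\log$ finishes the argument, and aligning the thresholds to $k\ge\max(K_j,6)$ guarantees that both the bound $A(k)<C(k)$ and the hypothesis apply simultaneously.
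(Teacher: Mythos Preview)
Your proof is correct and follows exactly the same chain of inequalities as the paper: $\sigma(n)/n<A(k)B(k)<C(k)B(k)<e^\gamma\log(\log(m))\le e^\gamma\log(\log(n))$. The paper simply asserts $m\le n$ without justification, whereas you supply the index bookkeeping showing $\prod_{\ell=1}^k q_\ell\ge\prod_{1\le\ell\le k+1,\,\ell\ne j}p_\ell=m$; this extra detail is helpful but does not represent a different approach.
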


\begin{proof}
    We calculate
    \begin{equation}
        \frac{\sigma(n)}{n}<\frac{n}{\ph(n)}\leq A(k)B(k)<C(k)B(k)<e^\gamma\log(\log(m))\leq e^\gamma\log(\log(n)),
    \end{equation}
    where the last inequality follows from the fact that $m\leq n$.
\end{proof}

\begin{definition}
    The \emph{Chebyshev function} is defined as follows:
    \begin{equation}
        \theta(x)=\sum_{p\in\PP,p\leq x}\log(p)=\log\left(\prod_{p\in\PP,p\leq x}p\right).
    \end{equation}
\end{definition}

\begin{theorem}\label{bound on product of primes <=x}
    For $x\geq 529$,
    \begin{equation}
        \prod_{\substack{p\in\PP\\p\leq x}}p=e^{\theta(x)}>e^{x\left(1-\frac{1}{2\log x}\right)}\geq(2.51)^x.
    \end{equation}
\end{theorem}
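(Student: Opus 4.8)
The statement is a chain consisting of an equality followed by two inequalities, and I would establish the three pieces in turn. The equality $\prod_{p\le x}p=e^{\theta(x)}$ is immediate from the definition of the Chebyshev function, since $\theta(x)=\log\bigl(\prod_{p\le x}p\bigr)$.

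For the middle relation $e^{\theta(x)}>e^{x(1-\frac1{2\log x})}$, note that, $\exp$ being strictly increasing, it is equivalent to the explicit Chebyshev lower bound
\[
    \theta(x)>x\Bigl(1-\frac1{2\log x}\Bigr)=x-\frac{x}{2\log x}\qquad(x\ge 529).
\]
This is the one substantive ingredient. I would obtain it either by quoting a suitable explicit estimate for $\theta$ from \cite{RosserShoenfeld} — supplemented, should its stated range begin above $529$, by a direct check of the remaining finitely many cases — or by deriving it from the prime bounds of \cref{bounds on p_n}: writing $\theta(p_n)=\sum_{k\le n}\log p_k$, one bounds this sum below using $p_k>k(\log k+\log\log k-1)$ and compares it against the upper bound $p_{n+1}<(n+1)(\log(n+1)+\log\log(n+1))$ for the competing quantity. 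Since $\theta$ is constant on each interval $[p_n,p_{n+1})$ while $x-\frac{x}{2\log x}$ is increasing, it is enough to prove $\theta(p_n)\ge p_{n+1}\bigl(1-\frac1{2\log p_{n+1}}\bigr)$ for all $n$ with $p_n\ge 529$, together with a finite check below $529$.

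For the final relation $e^{x(1-\frac1{2\log x})}\ge(2.51)^x$, I would take logarithms and divide by $x>0$, reducing it to the elementary inequality
\[
    1-\frac1{2\log x}\ge\log(2.51)\qquad\Longleftrightarrow\qquad\log x\ge\frac1{2\bigl(1-\log(2.51)\bigr)}.
\]
Since $2.51<e$ the right-hand side is a fixed positive constant, slightly larger than $6$, which $\log x$ exceeds on the range $x\ge 529$; this is what pins down the numerical threshold in the statement. As $x\mapsto1-\frac1{2\log x}$ is increasing, it suffices to verify this at the left endpoint.

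The main obstacle is the Chebyshev lower bound $\theta(x)>x-\frac{x}{2\log x}$ holding down to $x=529$: this is the genuinely arithmetic step and is not elementary, resting on an explicit form of the prime number theorem (or on a careful summation of the prime estimates in \cref{bounds on p_n}). The rest of the argument is the definition of $\theta$ unwound, one line of algebra, and a numerical comparison.
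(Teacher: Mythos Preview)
Your proposal is correct and matches the paper's proof: the paper simply cites (3.14) of \cite{RosserShoenfeld} for the bound $\theta(x)>x\bigl(1-\frac{1}{2\log x}\bigr)$ and then notes that $1-\frac{1}{2\log x}$ is increasing, so the comparison with $\log(2.51)$ reduces to a single numerical check at the left endpoint. Your alternative derivation of the Chebyshev bound by summing the prime estimates of \cref{bounds on p_n} is not used in the paper and would be substantially more work than the one-line citation; you can drop it.
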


\begin{proof}
    The first inequality is given by (3.14) in \cite{RosserShoenfeld} and the second follows from computations since the function $f(x)=1-\frac{1}{2\log x}$ increases for $x>1$. 
\end{proof}

\begin{lemma}\label{bound on log(log(m))}
    For $k\geq 99$,
    \begin{equation*}
        \log(\log(m))>\log((k+1)(\log(k+1)+\log(\log(k+1))-1)\log(2.51)-\log(p_j))=:D(k). 
    \end{equation*}
\end{lemma}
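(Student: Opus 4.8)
The plan is to unravel the definition $m = p_{k+1}\#/p_j = \big(\prod_{i=1}^{k+1} p_i\big)/p_j$, so that $\log(m) = \theta(p_{k+1}) - \log(p_j)$, and then stack the two lower bounds already at our disposal: the Chebyshev-type estimate of \cref{bound on product of primes <=x} and the lower bound for the $(k+1)$st prime from \cref{bounds on p_n}.

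First I would verify the hypothesis of \cref{bound on product of primes <=x}. Since $k \geq 99$ we have $k+1 \geq 100$, hence $p_{k+1} \geq p_{100} = 541 > 529$ (note that $p_{99} = 523 < 529$, which is precisely why the threshold is taken to be $99$). Thus \cref{bound on product of primes <=x} applies and gives $\prod_{i=1}^{k+1} p_i = e^{\theta(p_{k+1})} > (2.51)^{p_{k+1}}$, so that
\[
\log(m) = \theta(p_{k+1}) - \log(p_j) > p_{k+1}\log(2.51) - \log(p_j).
\]

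Next, because $k \geq 99 \geq 6$, \cref{bounds on p_n} applied with $k+1$ in place of $k$ yields $p_{k+1} > (k+1)(\log(k+1)+\log(\log(k+1))-1)$; multiplying by the positive constant $\log(2.51)$ preserves this, so
\[
\log(m) > (k+1)\big(\log(k+1)+\log(\log(k+1))-1\big)\log(2.51) - \log(p_j).
\]
For $k \geq 99$ the right-hand side is positive — its first term already exceeds $100\log(100)\log(2.51)$, which overwhelms the fixed constant $\log(p_j)$ — so applying the increasing function $\log(\cdot)$ to both sides yields $\log(\log(m)) > D(k)$, which is the claim.

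I do not anticipate any genuine obstacle: the lemma is essentially a bookkeeping step that repackages the earlier estimates into the quantity $D(k)$, which will later be compared against $C(k)B(k)$ in \cref{C(k)<e^gamma log(log(m))}. The only point deserving a moment's care is the numerical threshold $p_{k+1} \geq 529$ demanded by \cref{bound on product of primes <=x} — this is exactly what pins down the constant $99$ — together with the trivial check that the argument of the outer logarithm remains positive in the stated range.
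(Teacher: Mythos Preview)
Your proof is correct and follows exactly the same route as the paper: use $k\geq 99$ to ensure $p_{k+1}>529$, apply \cref{bound on product of primes <=x} to bound $\log(m)$ below by $p_{k+1}\log(2.51)-\log(p_j)$, then invoke \cref{bounds on p_n} to replace $p_{k+1}$ by $(k+1)(\log(k+1)+\log(\log(k+1))-1)$. Your write-up is more explicit than the paper's (noting $p_{100}=541$ and checking positivity of the argument of the outer logarithm), but the argument is identical.
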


\begin{proof}
    Noting that $k\geq 99$ implies $p_{k+1}>529$, we calculate
    \begin{equation}
        \begin{aligned}
            \log(\log(m)) &= \log\left(\log\left(\frac{p_{k+1}\#}{p_j}\right)\right)>\log\left(\log\left(\frac{(2.51)^{p_{k+1}}}{p_j}\right)\right) \\
            &=  \log(p_{k+1}\log(2.51)-\log(p_j))\\
            &> \log((k+1)(\log(k+1)+\log(\log(k+1))-1)\log(2.51)-\log(p_j)),
        \end{aligned}
    \end{equation}
    where the last inequality uses \ref{bounds on p_n}.
\end{proof}

The following implies \ref{C(k)<e^gamma log(log(m))}:

\begin{proposition}
    For $j\in\{1,2,3\}$, there exists a $K_j\in\NN$ such that $k\geq K_j$ implies
    \begin{equation}\label{C(k)B(k)<e^gamma D(k)}
        C(k)B(k)<e^\gamma D(k).
    \end{equation}
\end{proposition}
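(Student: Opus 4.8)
The plan is to study the quotient $C(k)B(k)/\big(e^\gamma D(k)\big)$ as $k\to\infty$, showing that it converges to a constant which is $<1$ exactly when $p_j\in\{2,3,5\}$; the existence of $K_j$ is then immediate from the definition of a limit. Throughout, abbreviate $u(k)=\log(k+1)+\log(\log(k+1))$ and $v(k)=u(k)-1$, so that $C(k)=\log\!\big((k+1)u(k)\big)\exp\!\big(c_1-1/p_j+5/\log((k+1)v(k))\big)$ and $D(k)=\log\!\big((k+1)v(k)\log(2.51)-\log(p_j)\big)$.

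First I would record a uniform bound on $B(k)$. Since $B(k)=\prod_\ell\frac{p_\ell^2}{p_\ell^2-1}$ is a partial product of $\prod_{p\in\PP}\frac{p^2}{p^2-1}=\zeta(2)=\frac{\pi^2}{6}$ which omits exactly the factor $\frac{p_j^2}{p_j^2-1}$ together with all factors indexed by $\ell>k+1$, and every omitted factor exceeds $1$, we obtain
\begin{equation*}
    B(k)<B_\infty:=\frac{\pi^2}{6}\cdot\frac{p_j^2-1}{p_j^2}\qquad\text{for every }k.
\end{equation*}

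Next I would pin down the asymptotics of $C(k)$ and $D(k)$. For $C(k)$, one has $\log\!\big((k+1)u(k)\big)=\log(k+1)+\log(\log(k+1))+\log\!\big(1+\tfrac{\log(\log(k+1))}{\log(k+1)}\big)=u(k)+o(1)$, while $5/\log((k+1)v(k))\to 0$, so $C(k)/u(k)\to e^{c_1-1/p_j}$. For $D(k)$, pulling out the dominant factor gives $D(k)=\log(k+1)+\log v(k)+\log\log(2.51)+\log\!\big(1-\tfrac{\log p_j}{(k+1)v(k)\log(2.51)}\big)$, and since $\log v(k)=\log(\log(k+1))+o(1)$ while the remaining two summands are $O(1)$, we get $D(k)/u(k)\to 1$. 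Combining this with the bound on $B(k)$,
\begin{equation*}
    \limsup_{k\to\infty}\frac{C(k)B(k)}{e^\gamma D(k)}\le\frac{B_\infty\,e^{c_1-1/p_j}}{e^\gamma}.
\end{equation*}

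It remains only to check that the right-hand side is $<1$ when $p_j\in\{2,3,5\}$, and this is the sole genuinely arithmetic input. Using $c_1\approx 0.2615$ and $e^\gamma\approx 1.7811$, one computes $B_\infty e^{c_1-1/p_j}\approx 0.97,\ 1.36,\ 1.68$ for $p_j=2,3,5$ respectively, each strictly below $e^\gamma$; this is precisely where the hypothesis $p_j\in\{2,3,5\}$ is used, since for $p_j=7$ the analogous quantity is already $\approx 1.81>e^\gamma$. As $\frac{B_\infty e^{c_1-1/p_j}}{e^\gamma}<1$, there is a threshold $K_j$ beyond which $C(k)B(k)<e^\gamma D(k)$, which establishes the proposition and hence, via \cref{C(k)<e^gamma log(log(m))} and its corollary, Robin's inequality for all sufficiently large $n$ not divisible by $p_j$. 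The only delicate point is the bookkeeping of the nested logarithms hidden in the $o(1)$ estimates; if an explicit value of $K_j$ is desired one replaces each $o(1)$ by an explicit monotone bound (using \cref{bounds on p_n} to control $p_{k+1}$) and solves the resulting inequality numerically, but for the stated existence claim the limit argument is enough.
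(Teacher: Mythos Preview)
Your proposal is correct and follows essentially the same strategy as the paper: both bound $B(k)$ above by $B_\infty=\frac{\pi^2}{6}\cdot\frac{p_j^2-1}{p_j^2}$ and then reduce the question to verifying the single numerical condition $B_\infty e^{c_1-1/p_j}<e^\gamma$, which is exactly the paper's condition $E_j>1$ written in reciprocal form. The paper reaches this condition via an exponentiation-and-factor argument (tracking that $\widehat{C}(k)\to 1$ so $E_j-\widehat{C}(k)$ is eventually positive), whereas you reach it more transparently by computing $\lim C(k)/D(k)=e^{c_1-1/p_j}$ directly; the substance is the same.
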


\begin{proof}
    Denote 
    \begin{equation}
        \begin{aligned}
            \widetilde{C}(k) = e^{-c_1+\frac{1}{p_j}}C(k) &= \log((k+1)(\log(k+1)+\log(\log(k+1)))) \\
            &\qquad \exp\left(\frac{5}{\log((k+1)(\log(k+1)+\log(\log(k+1))-1))}\right) 
        \end{aligned}
    \end{equation}
    and
    \begin{equation}
        \widehat{C}(k)=\exp\left(\frac{5}{\log((k+1)(\log(k+1)+\log(\log(k+1))-1))}\right).
    \end{equation}
    Multiplying both sides of \eqref{C(k)B(k)<e^gamma D(k)} by $e^{-c_1+\frac{1}{p_j}}p_j^2/(p_j^2-1)$, we obtain
    \begin{equation}\label{C(k)B(k)<e^gamma D(k) proof eq 1}
        \widetilde{C}(k)\prod_{\ell=1}^{k+1}\frac{p_\ell^2}{p_\ell^2-1}<\frac{e^{\gamma-c_1+\frac{1}{p_j}p_j^2}}{p_j^2-1}D(k).
    \end{equation}
    Noting that
    \begin{equation}
        \prod_{\ell=1}^{k+1}\frac{p_\ell^2}{p_\ell^2-1}\nearrow\frac{\pi^2}{6}\text{ as }k\to\infty,
    \end{equation}
    we see that \eqref{C(k)B(k)<e^gamma D(k) proof eq 1} is implied by
    \begin{equation}\label{C(k)B(k)<e^gamma D(k) proof eq 2}
        \widetilde{C}(k)<\frac{6p_j^2e^{\gamma-c_1+\frac{1}{p_j}}}{\pi^2(p_j^2-1)}D(k)=:E_jD(k).
    \end{equation}
    Raising both sides to the power of $e$, we see that \eqref{C(k)B(k)<e^gamma D(k) proof eq 2} is implied by
    \begin{equation}\label{C(k)B(k)<e^gamma D(k) proof eq 3}
        \begin{aligned}
            & [(k+1)(\log(k+1)+\log(\log(k+1))]^{\widehat{C}(k)} \\
            &< [(k+1)(\log(k+1)+\log(\log(k+1))-1)\log(2.51)-\log(p_j)]^{E_j}.
        \end{aligned}
    \end{equation}
    \eqref{C(k)B(k)<e^gamma D(k) proof eq 3} is equivalent to
    \begin{equation}\label{C(k)B(k)<e^gamma D(k) proof eq 4}
        \begin{aligned}
            1 &< [(k+1)(\log(k+1)+\log(\log(k+1)))]^{-\widehat{C}(k)+E_j} \\
            &\qquad \left[1-\frac{(k+1)\log(2.51)+\log(p_j)}{(k+1)(\log(k+1)+\log(\log(k+1)))}\right]^{E_j}.
        \end{aligned} 
    \end{equation}
    Noting that $E_j>1$ for $j\in\{1,2,3\}$, we see that there exists a $K_j\in\NN$ such that $k\geq K_j$ implies $-\widehat{C}(k)+E_j>\epsilon$ for some $\epsilon\in(0,1)$. If needed, we can increase $K_j$ so that $k\geq K_j$ implies
    \begin{equation}
        \left[1-\frac{(k+1)\log(2.51)+\log(p_j)}{(k+1)(\log(k+1)+\log(\log(k+1)))}\right]^{E_j}>\epsilon,
    \end{equation}
    and also so that $k\geq K_j$ implies
    \begin{equation}
        1<\epsilon[(k+1)(\log(k+1)+\log(\log(k+1))]^\epsilon,
    \end{equation}
    which implies \eqref{C(k)B(k)<e^gamma D(k) proof eq 4}.
\end{proof}

\subsection{All numbers not divisible by one of the prime numbers 2,3, 5}

Letting $j=1$ in \eqref{C(k)B(k)<e^gamma D(k) proof eq 2}, we seek to show that
\begin{equation}\label{C(k)B(k)<e^gamma D(k) proof eq 5}
    \widetilde{C}(k)<\frac{8e^{\gamma-c_1+.5}}{\pi^2}D(k).
\end{equation}

\begin{lemma}\label{widehat C(k)<1.525}
    For $k\geq 13042$, $\widehat{C}(k)<1.525$.
\end{lemma}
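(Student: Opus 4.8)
The plan is to turn the claim into a single numerical check via a monotonicity argument. Set
\[
g(k):=(k+1)\big(\log(k+1)+\log\log(k+1)-1\big),
\]
so that $\widehat{C}(k)=\exp\!\big(5/\log g(k)\big)$. First I would verify that $g$ is increasing: differentiating (as a function of a real variable) the terms $(k+1)\log(k+1)$, $(k+1)\log\log(k+1)$, and $-(k+1)$ and simplifying, one finds $g'(k)=\log(k+1)+\log\log(k+1)+1/\log(k+1)$, the $-1$ having cancelled against the derivative of $(k+1)\log(k+1)$, and this is visibly positive whenever $k+1>e$, hence for all $k\geq 13042$. Since moreover $g(13042)>1$, the function $\log g$ is positive and increasing on $[13042,\infty)$, so $5/\log g(k)$ is decreasing and therefore $\widehat{C}$ is decreasing there. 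Consequently it is enough to prove $\widehat{C}(13042)<1.525$.

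After taking logarithms this amounts to
\[
\log\!\Big(13043\big(\log 13043+\log\log 13043-1\big)\Big)\;>\;\frac{5}{\log 1.525}.
\]
I would establish this by controlling each logarithm numerically: two-sided exponential bounds give $\log 13043=9.4760\ldots$, whence $\log\log 13043=\log(9.4760\ldots)=2.2487\ldots$, so that $13043\big(\log 13043+\log\log 13043-1\big)=13043\cdot 10.7247\ldots>1.39883\cdot10^{5}$, making the left-hand side exceed $11.84855$; on the other side, expanding $\log(1+x)$ at $x=\tfrac1{60}$ yields $\log 1.525=0.4219944\ldots$, hence $5/\log 1.525<11.84852$, and combining the two bounds gives the inequality.

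The real difficulty is entirely one of numerical precision, not of strategy: the true value of $\widehat{C}(13042)$ is about $1.52500$, so the slack against $1.525$ is only on the order of $10^{-6}$, and the logarithmic comparison above is decided in roughly the fifth to seventh significant figure (the gap between its two sides being about $6\times10^{-5}$). Every estimate that enters — for $\log 13043$, for $\log(9.476\ldots)$, for $\log(1.39883\cdot10^{5})$, and above all for $\log 1.525$ — must therefore be justified sharply, e.g.\ by exhibiting explicit exponential bounds and by truncating the fast-converging alternating series for $\log(1+x)$ with a rigorous error term, rather than being quoted to two or three digits; a single coarse rounding reverses the inequality. Everything else — the monotonicity reduction and the final comparison of two six-digit numbers — is routine, and the particular threshold $13042$ is essentially forced: one checks that $\widehat{C}(13041)>1.525$, so no smaller value of $K_1$ would serve in \eqref{C(k)B(k)<e^gamma D(k) proof eq 5}.
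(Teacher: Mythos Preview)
Your proposal is correct and follows exactly the approach of the paper: the paper's proof reads in its entirety ``$\widehat{C}(k)$ is decreasing, so the result follows from computation,'' and you have supplied precisely the monotonicity argument and the boundary computation that this sentence encapsulates. Your added detail (the explicit derivative of $g$, the careful numerical bounds, and the observation that the threshold $13042$ is sharp) goes well beyond what the paper records, but the underlying strategy is identical.
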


\begin{proof}
    $\widehat{C}(k)$ is decreasing, so the result follows from computation.
\end{proof}

\noindent Denote $f(k)=(k+1)(\log(k+1)+\log(\log(k+1))$. Applying \ref{widehat C(k)<1.525}to \eqref{C(k)B(k)<e^gamma D(k) proof eq 5} and performing some algebraic manipulations, our goal reduces to showing that
\begin{equation}\label{C(k)B(k)<e^gamma D(k) proof eq 6}
    \log(f(k))<\frac{8e^{\gamma-c_1+.5}}{\pi^2(1.525)}\log((f(k)-1)\log(2.51)-\log(2)).
\end{equation}
Raising both sides to the power of $e$, this becomes
\begin{equation}\label{C(k)B(k)<e^gamma D(k) proof eq 7}
    1<f(k)^{2.0166}\left[1-\frac{(k+1)\log(2.51)-\log(2)}{f(k)}\right]^{1.20166}.
\end{equation}
The RHS of \eqref{C(k)B(k)<e^gamma D(k) proof eq 7} is increasing, and a computation reveals that it holds for $k\geq 13042$. Additionally, using \ref{sigma(n)/n<n/phi(n)}, one can check that
\begin{equation}
    \frac{\sigma(n)}{n}<\frac{n}{\ph(n)}<e^\gamma\log(\log(m))
\end{equation}
for $k\geq 3$. Finally, when $k\in\{1,2\}$, we check that
\begin{equation}\label{C(k)B(k)<e^gamma D(k) proof eq 8}
    \frac{\sigma(n)}{n}<\frac{n}{\ph(n)}\leq\frac{15}{8}<e^\gamma\log(\log(n))
\end{equation}
for $n\geq 680$. This confirms the following for $j=1$:
\begin{theorem}\label{Robin's inequality for numbers not divisible by p_j}
    For $j\in\{1,2,3\}$, Robin's inequality holds for every natural number $>5040$ which is not divisible by $p_j$.
\end{theorem}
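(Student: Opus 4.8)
Since the case $j=1$ has just been established, the plan is to run the identical scheme for $j=2$ and $j=3$, altering only the numerical constants. First I would specialize \eqref{C(k)B(k)<e^gamma D(k) proof eq 2} to each case, recording the explicit values
\[
E_2=\frac{54\,e^{\gamma-c_1+1/3}}{8\pi^2}\approx 1.31,\qquad E_3=\frac{150\,e^{\gamma-c_1+1/5}}{24\pi^2}\approx 1.06,
\]
both of which exceed $1$; this is the inequality $E_j>1$ already used in the proof of the Proposition, now made quantitative. As in \cref{widehat C(k)<1.525}, since $\widehat C(k)$ decreases to $1$, I would fix for each $j$ a constant $\lambda_j\in(1,E_j)$ together with an explicit threshold $M_j$ such that $\widehat C(k)<\lambda_j$ for all $k\ge M_j$; the smaller $E_j-1$ is, the closer $\lambda_j$ must be taken to $1$, so $M_j$ grows very rapidly as $E_j$ approaches $1$. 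Substituting this into \eqref{C(k)B(k)<e^gamma D(k) proof eq 2} and repeating the manipulations that led from \eqref{C(k)B(k)<e^gamma D(k) proof eq 5} to \eqref{C(k)B(k)<e^gamma D(k) proof eq 7}, the goal becomes an inequality of the shape
\[
1<f(k)^{a_j}\left[1-\frac{(k+1)\log(2.51)-\log(p_j)}{f(k)}\right]^{b_j}
\]
with explicit positive constants $a_j,b_j$, whose right-hand side is eventually monotone increasing in $k$; it then suffices to exhibit one explicit $K_j$ (necessarily at least $M_j$, and at least the value $99$ demanded by \cref{bound on log(log(m))}) at which it holds. By \cref{bound on log(log(m))} and the Corollary to \cref{C(k)<e^gamma log(log(m))}, this gives Robin's inequality for every $n>5040$ not divisible by $p_j$ that has at least $K_j$ distinct prime factors.

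It then remains to treat $n>5040$, not divisible by $p_j$, with fewer than $K_j$ distinct prime factors. Write $P_k=\prod\frac{p}{p-1}$ over the $k$ smallest primes $\ne p_j$ and $m=p_{k+1}\#/p_j$ for their product, so that \eqref{first bound on sigma(n)/n} gives both $\sigma(n)/n<n/\ph(n)\le P_k$ and $n\ge m$ for such an $n$. Since $e^\gamma\log\log(5040)\approx 3.82$ and $\log\log$ is increasing, every $k$ with $P_k<e^\gamma\log\log(5040)$ — which a short computation shows is every $k\le 6$ when $j=2$ and every $k\le 3$ when $j=3$ — is settled at once, because then $\sigma(n)/n\le P_k<e^\gamma\log\log(n)$ for all $n>5040$; and every remaining $k$ for which $P_k\le e^\gamma\log\log(m)$ is settled by $\sigma(n)/n\le P_k\le e^\gamma\log\log(m)\le e^\gamma\log\log(n)$. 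Since $m$ is forced to be enormous once $k$ is even moderately large, only a short list of exceptional $k$ survives: this list is empty when $j=2$, and for $j=3$ it is essentially $k\in\{4,5,6\}$, corresponding to the finitely many $n>5040$ whose prime support consists of one of the first few primes $\ne 5$ together with a small cofactor. For each of those $n$ one verifies $\sigma(n)<e^\gamma n\log\log(n)$ by direct computation.

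The hard part is the case $j=3$. Because $E_3$ exceeds $1$ by only about $0.06$, the constant $\lambda_3$ has to be chosen extraordinarily close to $1$, which forces $M_3$ and hence $K_3$ to be astronomically large; in particular the reduced inequality cannot be checked for $k\ge K_3$ by any finite search and must instead be obtained from the monotonicity of its right-hand side together with a single evaluation at $k=K_3$. The second, milder difficulty — again peculiar to $j=3$ — is precisely the window $k\in\{4,5,6\}$, where neither the shortcut $P_k<e^\gamma\log\log(5040)$ nor the estimate $P_k\le e^\gamma\log\log(m)$ is available; this is closed by the explicit finite check described above. Apart from these two points, the argument for $j\in\{2,3\}$ is a transcription of the $j=1$ proof with the constants $E_j,\lambda_j,a_j,b_j,K_j$ updated.
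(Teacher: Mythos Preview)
Your asymptotic scheme for large $k$ is the paper's, and your description of how $E_2,E_3$ enter is accurate. The gap is in the ``remaining'' range. You correctly argue that $K_3$ must be astronomical (on the order of $e^{100}$, since $\widehat C(k)<1.05$ forces $\log((k+1)\log(k+1))>100$), but you then assert that for $7\le k<K_3$ the inequality $P_k\le e^\gamma\log\log(m_k)$ holds, leaving only $k\in\{4,5,6\}$ as exceptions. This assertion is exactly what you have no proof of. The inequality $P_k\le e^\gamma\log\log(m_k)$ is nothing other than $A(k)B(k)<e^\gamma\log\log(m_k)$ in the paper's notation, and the entire purpose of the $C(k),D(k)$ machinery is to give an effective range of $k$ for which it is provably true; for $j=3$ that range begins only at $k=K_3$. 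Saying ``$m$ is forced to be enormous once $k$ is moderately large'' is the heuristic Mertens-plus-PNT reason the inequality is \emph{eventually} true, but it gives you no effective threshold below $K_3$. So the window $7\le k<K_3$ --- about $10^{42}$ values of $k$ --- is neither covered by your asymptotic bound nor amenable to the per-$k$ numerical check you propose, and your argument as written does not close it. (The same issue arises, less dramatically, for $j=2$.)

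The paper closes this window differently: it simply cites \cite{MorrillandPlatt}, which verifies Robin's inequality for every integer $5041<n\le 10^{10^{13.11485}}$. That single citation handles all $n$ with $k<K_j$ at once, because for such $n$ one has $\sigma(n)/n\le P_{K_j-1}$, and $P_{K_j-1}$ is (even for $K_3\sim 10^{42}$) only of order $e^\gamma\log p_{K_3}\approx 10^2$, far below $e^\gamma\log\log\bigl(10^{10^{13.11485}}\bigr)\approx e^\gamma\cdot 3\times 10^{13}$; so any $n$ with $k<K_j$ and $n>10^{10^{13.11485}}$ satisfies Robin automatically, and the rest fall under \cite{MorrillandPlatt}. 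Your plan would become correct if you replaced the unproven claim about the ``short list of exceptional $k$'' with this appeal to \cite{MorrillandPlatt}.
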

\noindent To confirm \ref{Robin's inequality for numbers not divisible by p_j} when $j\in\{2,3\}$, one can repeat the above process to see that, for sufficiently big $k$, \eqref{C(k)B(k)<e^gamma D(k) proof eq 2} is satisfied. The cases with smaller $k$ have been verified in \cite{MorrillandPlatt}.

\subsection{Primorials and sufficiently big even numbers}
  We consider numbers of the form $2^{k}n$ for odd $n$.

Fix $k\in\NN$ and let $n$ be odd. We calculate
\begin{equation}
    \frac{\sigma(2^kn)}{2^kn}=\frac{\sigma(2^k)}{2^k}\frac{\sigma(n)}{n}<\frac{\sigma(2^k)}{2^k}\frac{n}{\ph(n)}=\frac{\sigma(2^k)}{2^k}\frac{\ph(2^k)}{2^k}\frac{2^kn}{\ph(2^kn)}=\left(1-\frac{1}{2^{k+1}}\right)\frac{2^kn}{\ph(2^kn)}.
\end{equation}
Applying Theorem 15 from \cite{RosserShoenfeld}, we know
\begin{equation}
    \left(1-\frac{1}{2^{k+1}}\right)\frac{2^kn}{\ph(2^kn)}<\left(1-\frac{1}{2^{k+1}}\right)\left(e^\gamma \log(\log(2^kn))+\frac{2.51}{\log(\log(2^kn))}\right).
\end{equation}
We ask which $n$ satisfy
\begin{equation}
    \left(1-\frac{1}{2^{k+1}}\right)\left(e^\gamma \log(\log(2^kn))+\frac{2.51}{\log(\log(2^kn))}\right)<e^\gamma(\log(\log(2^kn))).
\end{equation}
This is equivalent to asking when
\begin{equation}
    \frac{2.51(2^{k+1}-1)}{e^\gamma}<(\log(\log(2^kn)))^2
\end{equation}
holds, which is when
\begin{equation}\label{lower bound on n for sigma(2^k n)/2^k n<e^gamma log(log(2^k n))}
    n>\frac{e^{e^{\sqrt{\frac{2.51(2^{k+1}-1)}{e^\gamma}}}}}{2^k}=:b(k).
\end{equation}
Thus, we obtain the following:
\begin{theorem}\label{Robin's inequality holds for sufficiently big numbers of the form 2^k n}
    Given any $k\in\NN$, Robin's inequality holds for all numbers of the form $2^kn$ when $n$ is odd and satisfies \eqref{lower bound on n for sigma(2^k n)/2^k n<e^gamma log(log(2^k n))}.
\end{theorem}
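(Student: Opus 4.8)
The proof is essentially the chain of (in)equalities already displayed just above the statement; the plan is to record it carefully and to check that the final reduction is genuinely an equivalence. First I would use that $n$ is odd, so $\gcd(2^k,n)=1$ and both $\sigma$ and $\ph$ are multiplicative across the factorization $2^k\cdot n$, giving $\sigma(2^kn)/(2^kn)=(\sigma(2^k)/2^k)(\sigma(n)/n)$. Applying the elementary bound $\sigma(n)/n<n/\ph(n)=\prod_{p\mid n}p/(p-1)$ established in the proof of \cref{sigma(n)/n<n/phi(n)} to the odd part $n$, then substituting $\sigma(2^k)/2^k=(2^{k+1}-1)/2^k$ and $\ph(2^k)/2^k=1/2$ and using $n/\ph(n)=(\ph(2^k)/2^k)\cdot 2^kn/\ph(2^kn)$, rewrites the bound as $\sigma(2^kn)/(2^kn)<(1-2^{-(k+1)})\,2^kn/\ph(2^kn)$.

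Next I would apply Theorem 15 of \cite{RosserShoenfeld} to the integer $m=2^kn$ — whose hypothesis $m\ge 3$ is automatic, since $b(k)\cdot 2^k=\exp(\exp(\sqrt{2.51(2^{k+1}-1)/e^\gamma}))$ already far exceeds $3$ for every $k$ — to obtain
\begin{equation*}
    \frac{\sigma(2^kn)}{2^kn}<\left(1-\frac{1}{2^{k+1}}\right)\left(e^\gamma\log(\log(2^kn))+\frac{2.51}{\log(\log(2^kn))}\right).
\end{equation*}
It then remains only to show the right-hand side is $<e^\gamma\log(\log(2^kn))$. Writing $L=\log(\log(2^kn))>0$, this reads $(1-2^{-(k+1)})(e^\gamma L+2.51/L)<e^\gamma L$; cancelling the $e^\gamma L$ terms and multiplying through by $2^{k+1}L>0$ turns it into $2.51(2^{k+1}-1)<e^\gamma L^2$, which, since $L>0$, is equivalent to $L>\sqrt{2.51(2^{k+1}-1)/e^\gamma}$, i.e., after exponentiating twice, to $n>b(k)$ as in \eqref{lower bound on n for sigma(2^k n)/2^k n<e^gamma log(log(2^k n))}. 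Since that is precisely the hypothesis on $n$, stringing the inequalities together yields $\sigma(2^kn)<e^\gamma(2^kn)\log(\log(2^kn))$, which is Robin's inequality for $2^kn$.

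There is no real obstacle here; the only points needing care are (i) that $2^kn$ is large enough both for $L>0$ and for Theorem 15 of \cite{RosserShoenfeld} to be applicable, which follows from the explicit size of $b(k)\cdot 2^k$ noted above, and (ii) that every step of the $L$-reduction is reversible, so that $n>b(k)$ is the exact threshold rather than merely a convenient sufficient condition. Note also that one need not separately impose $2^kn>5040$: the displayed chain consists of strict inequalities and directly establishes $\sigma(2^kn)<e^\gamma(2^kn)\log(\log(2^kn))$, so $2^kn$ is automatically none of the finitely many exceptions to Robin's inequality below $5040$.
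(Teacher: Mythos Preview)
Your proposal is correct and follows essentially the same route as the paper: the argument is precisely the chain of displayed inequalities preceding the theorem (multiplicativity, the bound $\sigma(n)/n<n/\ph(n)$ on the odd part, Theorem~15 of \cite{RosserShoenfeld}, and the algebraic reduction to $n>b(k)$). Your write-up is in fact somewhat more careful than the paper's in explicitly verifying the side conditions (applicability of Rosser--Schoenfeld, positivity of $L$, reversibility of the reduction).
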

In particular, we have
\begin{corollary}
    If $n\geq 620$ is odd, then Robin's inequality holds for $2n$. Furthermore, Robin's inequality holds for all primorials $>30$.
\end{corollary}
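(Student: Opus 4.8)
The plan is to read off both assertions from \cref{Robin's inequality holds for sufficiently big numbers of the form 2^k n} with $k=1$, together with a short finite check. Specializing \eqref{lower bound on n for sigma(2^k n)/2^k n<e^gamma log(log(2^k n))} to $k=1$, so that $2^{k+1}-1=3$, the hypothesis reads
\[ n>b(1)=\tfrac12\exp\!\left(\exp\!\left(\sqrt{\tfrac{2.51\cdot 3}{e^\gamma}}\,\right)\right), \]
and a direct numerical evaluation gives $b(1)<1241$. Thus \cref{Robin's inequality holds for sufficiently big numbers of the form 2^k n} already furnishes Robin's inequality for $2n$ for every odd $n\ge 1241$, and what is left for the first assertion is the finite family of odd $n$ with $620\le n<1241$.

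For these I would simply verify $\sigma(2n)<e^\gamma\,(2n)\log\log(2n)$ numerically. This is the only step that is not a one-line substitution, and hence the main---indeed the only---piece of genuine work; it is ``hard'' only in the bookkeeping sense, since each such $2n$ satisfies the inequality with room to spare (none of them lies in the finite set of exceptions to Robin's inequality). Alternatively one may shortcut this step by citing the computational verifications recorded in \cite{MorrillandPlatt}.

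For the primorials, I would note that a primorial $p_k\#$ exceeding $30$ has $k\ge 4$ and equals $2n$ with $n=3\cdot5\cdots p_k$ odd. If $k\ge 5$ then $n\ge 3\cdot5\cdot7\cdot11=1155\ge 620$, so the first assertion applies and yields Robin's inequality for $p_k\#$. The sole remaining primorial is $p_4\#=210=2\cdot105$ (here $105<620$), for which I would check directly that $\sigma(210)=3\cdot4\cdot6\cdot8=576$ is smaller than $e^\gamma\cdot 210\cdot\log\log 210\approx 627$. Assembling these cases gives the corollary.
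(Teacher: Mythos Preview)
Your argument is correct and follows the paper's approach: specialize the preceding theorem to $k=1$ and dispose of the finitely many leftover cases by direct computation (in particular the primorial $210$). If anything you are more careful than the paper, which asserts that the first statement ``follows immediately'' from the theorem even though $b(1)\approx 1240>620$; your explicit finite check for odd $620\le n<1241$ fills exactly that gap.
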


\begin{proof}
    The first statement follows immediately from \ref{Robin's inequality holds for sufficiently big numbers of the form 2^k n} and the second follows from the computation of primorials $<1240$.
\end{proof}

\subsection{All 21-free numbers}\label{21-free numbers}
  The results of the previous subsection, are based on the inequality in Theorem 15 from \cite{RosserShoenfeld}. This inequality can be improved by using a better bound stated in \cite{AxlerNicolas}:
    \begin{equation}
        \frac{m}{\ph(m)} < e^{\gamma} \log(\log(m))  + \frac{.0168}{(\log(\log(m)))^2}.
    \end{equation}
  for $m \geq 10^{10^{13.11485}}= C.$
    Using the same reasoning as before, we derive the following result.
    \begin{theorem}\label{Robin's inequality holds for sufficiently big numbers of the form 2^k n improved}
      Given any $k$ natural number, Robin's inequality holds for all numbers of the form $2^kn$ when $n$ is odd and satisfies 
      \begin{equation} 
         2^kn > e^{e^{\left(\frac{.0168(2^{k+1} -1)}{e^{\gamma}}\right)^{\frac{1}{3}}}} =: 2^k\tilde{b}(k)
      \end{equation}
    \end{theorem}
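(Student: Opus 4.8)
The plan is to follow, almost verbatim, the template of \cref{Robin's inequality holds for sufficiently big numbers of the form 2^k n}, but to feed in the sharper Axler--Nicolas estimate displayed just before the statement in place of Theorem 15 of \cite{RosserShoenfeld}. Writing $m = 2^k n$ with $n$ odd, I would first record, exactly as in the preceding subsection, that
\begin{equation*}
    \frac{\sigma(m)}{m} = \frac{\sigma(2^k)}{2^k}\cdot\frac{\sigma(n)}{n} < \frac{\sigma(2^k)}{2^k}\cdot\frac{n}{\ph(n)} = \frac{\sigma(2^k)}{2^k}\cdot\frac{\ph(2^k)}{2^k}\cdot\frac{m}{\ph(m)} = \left(1-\frac{1}{2^{k+1}}\right)\frac{m}{\ph(m)},
\end{equation*}
where the middle inequality is \cref{sigma(n)/n<n/phi(n)} applied to the odd part $n$, and the last equality uses $\sigma(2^k)\ph(2^k)/4^k = (2^{k+1}-1)/2^{k+1}$.

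Next, assuming $m \geq C$ so that the Axler--Nicolas bound applies, I would substitute $\frac{m}{\ph(m)} < e^\gamma\log(\log m) + \frac{.0168}{(\log(\log m))^2}$, reducing the problem to determining for which $m$ we have
\begin{equation*}
    \left(1-\frac{1}{2^{k+1}}\right)\left(e^\gamma\log(\log m) + \frac{.0168}{(\log(\log m))^2}\right) < e^\gamma\log(\log m).
\end{equation*}
Subtracting $\left(1-\frac{1}{2^{k+1}}\right)e^\gamma\log(\log m)$ from both sides, this is equivalent to $\left(1-\frac{1}{2^{k+1}}\right)\frac{.0168}{(\log(\log m))^2} < \frac{e^\gamma\log(\log m)}{2^{k+1}}$, i.e.\ to $(\log(\log m))^3 > \frac{.0168(2^{k+1}-1)}{e^\gamma}$, which rearranges precisely to $m > e^{e^{\left(.0168(2^{k+1}-1)/e^\gamma\right)^{1/3}}} = 2^k\tilde b(k)$. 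Chaining the displayed inequalities then yields $\sigma(m)/m < e^\gamma\log(\log m)$, which is Robin's inequality for $m$. Note that the only structural difference from the $2.51$ case is that the error term carries $(\log(\log m))^{-2}$ rather than $(\log(\log m))^{-1}$, which is exactly why a cube root appears here where a square root appeared before.

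The one genuine subtlety — and the step I expect to require the most care — is honoring the hypothesis $m \geq C$ needed for the Axler--Nicolas inequality to be valid, since the theorem as stated only imposes $m > 2^k\tilde b(k)$. The remedy is to observe that $2^k\tilde b(k) = e^{e^{\left(.0168(2^{k+1}-1)/e^\gamma\right)^{1/3}}}$ grows doubly exponentially in $2^{k/3}$, so a short explicit computation shows that $2^k\tilde b(k) \geq C$ already holds for all $k$ above a modest explicit bound; for such $k$ the condition $m > 2^k\tilde b(k)$ by itself forces $m \geq C$ and the argument above goes through unchanged. For the finitely many remaining small values of $k$, the numbers $2^kn$ with $2^k\tilde b(k) < m < C$ are already covered by \cref{Robin's inequality holds for sufficiently big numbers of the form 2^k n}, whose threshold $b(k)$ is larger but whose underlying estimate (Theorem 15 of \cite{RosserShoenfeld}) is valid for all $m \geq 3$; hence no number of the form $2^kn$ above $2^k\tilde b(k)$ is lost. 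Apart from this bookkeeping, the proof is a routine rerun of the previous argument with the new constant and exponent.
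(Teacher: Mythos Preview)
Your main chain of inequalities is exactly what the paper means by ``using the same reasoning as before'': you isolate the factor $1-2^{-(k+1)}$ as in the previous subsection, swap in the Axler--Nicolas bound for Theorem~15 of \cite{RosserShoenfeld}, and the cube root drops out of the $(\log\log m)^{-2}$ error term just as the square root did from $(\log\log m)^{-1}$. That part is correct and matches the paper.

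Your treatment of the hypothesis $m\geq C$ required for the Axler--Nicolas estimate, however, has a gap. You claim that for the small values of $k$ the range $2^k\tilde b(k)<m<C$ is covered by \cref{Robin's inequality holds for sufficiently big numbers of the form 2^k n}, but that theorem only yields Robin's inequality once $m>2^k b(k)$, and you yourself note that $b(k)>\tilde b(k)$. The fact that the Rosser--Schoenfeld estimate is valid for all $m\geq 3$ is irrelevant here: that estimate only \emph{implies} Robin's inequality above the threshold $2^k b(k)$, so the window $2^k\tilde b(k)<m\leq 2^k b(k)$ remains untouched by your argument. (For instance, when $k=2$ one has $4\tilde b(2)\approx 4.5$ while $4b(2)\approx 10^{10}$, leaving a genuinely large interval of $m>5040$ unaccounted for.) The paper does not close this gap inside the present theorem at all; it simply records the inequality manipulation and then, immediately afterwards, invokes the computation of Morrill--Platt \cite{MorrillandPlatt} verifying Robin's inequality for all $5041<m\leq C$, packaging the combined conclusion as the next theorem. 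That is the fix your argument needs as well: replace the appeal to \cref{Robin's inequality holds for sufficiently big numbers of the form 2^k n} by an appeal to \cite{MorrillandPlatt}.
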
 
     
  Furthermore, it was shown in \cite{MorrillandPlatt} that Robin's inequality holds for all natural numbers $5041 < m \leq C.$. We can thus conclude the following: 
   \begin{theorem}
      Robin's inequality holds for all natural numbers of the form $2^kn$ with $n$ odd as long as $2^k\tilde{b}(k) < C.$ 
      In particular, Robin's inequality holds for all 21-free numbers. 
    \end{theorem}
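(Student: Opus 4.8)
The plan is to derive this by combining \cref{Robin's inequality holds for sufficiently big numbers of the form 2^k n improved} with the computational verification of \cite{MorrillandPlatt}. For the first assertion I would fix a $k$ with $2^k\tilde{b}(k)<C$, take $m=2^kn$ with $n$ odd and $m>5040$, and split on the size of $m$. If $m>2^k\tilde{b}(k)$, equivalently $n>\tilde{b}(k)$, then \cref{Robin's inequality holds for sufficiently big numbers of the form 2^k n improved} gives Robin's inequality for $m$ at once. If instead $5040<m\le 2^k\tilde{b}(k)$, then the hypothesis $2^k\tilde{b}(k)<C$ places $m$ in the finite window $5040<m\le C$, on which Robin's inequality was verified in \cite{MorrillandPlatt}. (If the single value $m=5041=71^2$ falls just outside the precise range tabulated there, it is odd and greater than $9$, hence covered by \cite{Choieetal}, or it can be checked by hand.) The two cases together establish the first sentence.

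For the ``in particular'' clause I would observe that a $21$-free integer $m$ has $2$-adic valuation at most $20$, so writing $m=2^kn$ with $n$ odd forces $k\in\{0,1,\dots,20\}$; hence it suffices to verify the hypothesis $2^k\tilde{b}(k)<C$ of the first part for these finitely many $k$. Since
\[
2^k\tilde{b}(k)=\exp\!\left(\exp\!\left(\left(\frac{.0168\,(2^{k+1}-1)}{e^{\gamma}}\right)^{1/3}\right)\right)
\]
is manifestly increasing in $k$, it is enough to treat $k=20$. Taking logarithms twice, $2^{20}\tilde{b}(20)<C=10^{10^{13.11485}}$ reduces to the single numerical inequality
\[
\frac{.0168\,(2^{21}-1)}{e^{\gamma}}<\bigl(13.11485\log 10+\log\log 10\bigr)^{3},
\]
whose two sides I expect to be roughly $2.0\times 10^{4}$ and $3.0\times 10^{4}$ respectively, so the bound holds.

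The one step I expect to be delicate is precisely this last numerical check: it is tight in $k$, since the same inequality fails at $k=21$, where the left-hand side roughly doubles while the right-hand side is unchanged — which is exactly why the conclusion is stated for $21$-free rather than $22$-free integers. Everything else is bookkeeping: one must confirm only that the tail $(2^k\tilde{b}(k),\infty)$ handled by \cref{Robin's inequality holds for sufficiently big numbers of the form 2^k n improved} and the window $(5040,C]$ handled by \cite{MorrillandPlatt} together exhaust all $m=2^kn>5040$, which is immediate once $2^k\tilde{b}(k)<C$.
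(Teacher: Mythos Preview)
Your proposal is correct and follows essentially the same two-case split as the paper: for $m=2^kn>5040$ with $2^k\tilde{b}(k)<C$, the range $m\le 2^k\tilde{b}(k)$ is handled by the computational verification in \cite{MorrillandPlatt} and the range $m>2^k\tilde{b}(k)$ by \cref{Robin's inequality holds for sufficiently big numbers of the form 2^k n improved}; for the $21$-free claim both you and the paper reduce to checking $2^{20}\tilde{b}(20)<C$ numerically, the paper comparing single logarithms ($\log(2^{20}\tilde{b}(20))<6\cdot 10^{11}<2.3\cdot 10^{13.11485}<\log C$) while you compare double logarithms. Your explicit observation that $2^k\tilde{b}(k)$ is increasing in $k$ (so only $k=20$ need be checked) and your care with the boundary value $m=5041$ are small expository improvements, not a different route.
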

    \begin{proof}
      Let $k$ be a natural number and $n$ be an odd natural number. 
      \begin{itemize}
          \item if $5041< 2^kn \leq 2^k \tilde{b}(k)<C$ then $2^kn$ satisfies Robin's inequality by \cite{MorrillandPlatt}
          \item Alternatively, if $2^kn > 2^k\tilde{b}(k)$ then $2^kn$ satisfies Robin's inequality by \ref{Robin's inequality holds for sufficiently big numbers of the form 2^k n improved}.
      \end{itemize}
      
      Recalling that a $\ell$-free number is a natural number not divisible by any $\ell$ power of a prime number greater than or equal to $2$, we can see that if $2^k\tilde{b}(k)< C$ then all $(k+1)$-free numbers satisfy Robin's inequality.
      Since $\log(2^{20}\tilde{b}(20)) < 6(10^{11})< 2.3 (10^{13.11485} < \log(C)$, we can conclude that Robin's inequality holds for all 21-free numbers.
      \end{proof}

    \begin{remark}
        The validity of Robin's inequality for $\ell$-free numbers was proved for $\ell = 7$  in \cite{SolePlanat}, for $\ell = 11$ in \cite{BroughanTrudgian} and for $\ell =20$ in \cite{MorrillandPlatt}.
    \end{remark}

\subsection{Almost every number}\label{almost every number}

\begin{definition}
    The \emph{natural density} of a set $E$ is
    \begin{equation}
        d(E)=\lim_{s\to\infty}\frac{\#E\cap\{1,2,\dots,s\}}{s}
    \end{equation}
    when the limit exists.
\end{definition}

\begin{theorem}
    Denote by $\mathcal R$ the set of numbers satisfying Robin's inequality. Then the natural density of $\mathcal R$ is $1$.
\end{theorem}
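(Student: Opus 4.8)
The plan is to prove that the complement $\mathcal R^{c}$ has \emph{upper} natural density $0$; since upper density of $\mathcal R$ is trivially at most $1$ and $\underline d(\mathcal R)=1-\overline d(\mathcal R^{c})$, this gives $d(\mathcal R)=1$. First I would reduce from $\sigma$ to $n/\ph(n)$. By definition $\mathcal R^{c}\subseteq\{n>5040:\sigma(n)\ge e^{\gamma}n\log\log n\}$, and since
\[
    \frac{\sigma(n)}{n}=\prod_{p^{\alpha}\|n}\frac{1-p^{-(\alpha+1)}}{1-p^{-1}}<\prod_{p\mid n}\Big(1-\tfrac1p\Big)^{-1}=\frac{n}{\ph(n)}
\]
(the same monotonicity used in the proof of \cref{sigma(n)/n<n/phi(n)}), it suffices to show
\[
    \overline d\big(\{\,n\ge3:\ n/\ph(n)\ge e^{\gamma}\log\log n\,\}\big)=0 .
\]
The gain is that $n/\ph(n)$ depends only on the \emph{set} of prime divisors of $n$, not on their multiplicities.

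Next I would split the prime divisors of $n$ at a threshold $y$, chosen first and sent to $\infty$ only at the end. Writing $T_{y}(n)=\sum_{p\mid n,\,p>y}1/p$, the elementary inequality $\big(1-\tfrac1p\big)^{-1}=1+\tfrac1{p-1}\le 1+\tfrac2p\le e^{2/p}$ gives
\[
    \frac{n}{\ph(n)}\ \le\ \Big(\prod_{p\le y}\big(1-\tfrac1p\big)^{-1}\Big)\exp\big(2T_{y}(n)\big)\ =:\ P_{y}\exp\big(2T_{y}(n)\big),
\]
with $P_{y}$ an absolute constant. A first-moment estimate controls how often $T_{y}$ is large:
\[
    \sum_{n\le x}T_{y}(n)=\sum_{y<p\le x}\frac1p\floor{x/p}\le x\sum_{p>y}\frac1{p^{2}},
\]
so by Markov's inequality $\#\{n\le x:T_{y}(n)\ge1\}\le R_{y}\,x$, where $R_{y}:=\sum_{p>y}p^{-2}\to0$ as $y\to\infty$.

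To conclude, fix $\epsilon>0$, pick $y$ with $R_{y}<\epsilon$, and then let $x\to\infty$. Every $n$ with $\sqrt x\le n\le x$ and $T_{y}(n)<1$ satisfies $n/\ph(n)\le P_{y}e^{2}$, a constant, while $e^{\gamma}\log\log n\ge e^{\gamma}\log\big(\tfrac12\log x\big)\to\infty$; hence for all $x$ larger than a bound depending only on $y$, no such $n$ lies in the exceptional set. Therefore
\[
    \#\{\,n\le x:\ n/\ph(n)\ge e^{\gamma}\log\log n\,\}\ \le\ \sqrt x+\#\{n\le x:T_{y}(n)\ge1\}\ \le\ \sqrt x+R_{y}\,x ,
\]
so the upper density is at most $R_{y}<\epsilon$; letting $\epsilon\downarrow0$ finishes the argument. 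I expect the only delicate point to be the order of quantifiers: the truncation level $y$ must be frozen before $x$ grows, so that $P_{y}$ and the threshold for $x$ stay out of the way of the limit. This is also the conceptual core — restricting to $n\ge\sqrt x$ lets $\log\log n$ diverge while the truncated Euler product $P_{y}$ stays bounded on a set of $n$ of density $1-\epsilon$.
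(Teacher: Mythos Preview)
Your proof is correct and self-contained, but it follows a genuinely different route from the paper's.

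The paper's argument decomposes $\mathcal R^{c}$ by the $2$-adic valuation: invoking the earlier results \cref{Robin's inequality for numbers not divisible by p_j} (Robin's inequality for all odd $n>5040$) and \cref{Robin's inequality holds for sufficiently big numbers of the form 2^k n} (Robin's inequality for $2^{k}n$ once $n>b(k)$), it obtains $\mathcal R^{c}\subseteq\bigcup_{k\ge1}E_{k}$ with $E_{k}=\{2^{k}n:n\in\NN_{\mathrm{odd}},\,n\le b(k)\}$ finite for each $k$. The first $M$ sets then contribute a bounded number of elements, while the tail $k>M$ is controlled because $\{n:2^{k}\Vert n\}$ has density $2^{-k-1}$, summing to less than $\epsilon/2$. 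Your argument instead passes to $n/\ph(n)$, truncates the Euler product at a threshold $y$, and uses a first-moment bound on $T_{y}(n)=\sum_{p\mid n,\,p>y}1/p$ together with Markov's inequality. What you gain is a proof that is completely independent of the paper's earlier machinery (no Rosser--Schoenfeld estimates, no explicit $b(k)$), at the cost of not tying into the paper's narrative of building Robin's inequality family by family. The paper's proof, by contrast, is deliberately designed as a corollary of the preceding sections.
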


\begin{proof}
    We will prove that the natural density of $\mathcal R^c$ is $0$. Fix $\epsilon>0$. Let $E_k=\{2^kn:n\in\mathbb N_\text{odd},n\leq b(k)\}$ and note that $\mathcal R^c\subseteq \bigcup_{k\geq 1}E_k$ by \ref{Robin's inequality for numbers not divisible by p_j} and \ref{Robin's inequality holds for sufficiently big numbers of the form 2^k n}.\footnote{Here $\mathbb N_\text{odd}:=\{1,3,\dots\}$.} Pick $M$ so that $\sum_{k=M+1}^\infty\frac{1}{2^k}<\frac\epsilon2$. For $s\in\mathbb N$ we calculate
    \begin{equation}\label{density calculations eq 1}
        \begin{aligned}
            \frac{\#\mathcal R^c\cap\{1,2,\dots,s\}}{s} &\leq \frac{\#\bigcup_{k\geq 1}E_k\cap\{1,2,\dots,s\}}{s}= \frac{\sum_{k\geq 1}\#E_k\cap\{1,2,\dots,s\}}{s} \\
            &= \frac{\sum_{k=1}^M \#E_k\cap\{1,2,\dots,s\}+\sum_{k=M+1}^\infty \#E_k\cap\{1,2,\dots,s\}}{s},
        \end{aligned}
    \end{equation}
    where the first equality follows from the fact that the $E_k$'s are disjoint. Noting that $\sum_{k=1}^M\#E_k\cap\{1,2,\dots,s\}<\infty$ for all $s\in\NN$, we see that we can pick $S$ so that $s\geq S$ implies that the RHS of \eqref{density calculations eq 1} is $<\epsilon$, completing our proof.
\end{proof}

\begin{remark}
    Using less elementary results from number theory, Wójtowicz \cite{Wojtowicz07} proves the following stronger result: for every $0 < C \le 1$, there exists a subset $W_C \subseteq \mathcal R$ such that $d(W_C)=1$ and for all $n\in W_C$,
    \begin{equation}
        \frac{\sigma(n)}{n}<Ce^\gamma \log(\log(n)).
    \end{equation}
\end{remark}

\section{The Lagarias and Kaneko-Lagarias Inequalities}\label{Lagarias ineq sec}

\subsection{Superabundant numbers}

Let $\Gamma(x)$ denote the gamma function. We define two functions:
\begin{equation}
    \begin{aligned}
        H(x) &= \int_0^1\frac{t^x-1}{t-1}\dd t, \\
        \psi(x) &= \frac{\Gamma'(x)}{\Gamma(x)}.
    \end{aligned}
\end{equation}
$\psi$ is known as the \emph{digamma function}. It is known that $H$ is smooth for $x\geq 1$ and that $H(n)=H_n$ for all $n\in\NN$. Additionally, $\psi$, known as the digamma function, satisfies
\begin{equation}\label{H(x)=psi(x+1)+gamma}
    H(x)=\psi(x+1)+\gamma.
\end{equation}

\begin{lemma}\label{first H(x) bound}
    For all $x\geq 1$,
    \begin{equation}\label{first H(x) bound eq}
        H(x)<\log(x)+\gamma+\frac{1}{2x}.
    \end{equation}
\end{lemma}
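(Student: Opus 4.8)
The plan is to reduce the claim to a standard estimate for the digamma function. Using the identity $H(x)=\psi(x+1)+\gamma$ recorded above, the inequality \eqref{first H(x) bound eq} is equivalent to $\psi(x+1)<\log(x)+\frac{1}{2x}$. This follows immediately from the classical bound $\psi(x)<\log(x)-\frac{1}{2x}$, valid for all $x>0$ (see, e.g., \cite{Alzer} or \cite{DolatabadiandFarhanddoost}), together with the functional equation $\psi(x+1)=\psi(x)+\frac1x$: one gets $\psi(x+1)<\log(x)-\frac{1}{2x}+\frac1x=\log(x)+\frac{1}{2x}$, and in fact the inequality holds for all $x>0$, not just $x\geq 1$.

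If a self-contained argument is preferred, I would instead set $f(x)=\log(x)+\gamma+\frac{1}{2x}-H(x)$ and show $f>0$ on $(0,\infty)$. Differentiating and using $H'(x)=\psi'(x+1)=\sum_{n=1}^\infty (n+x)^{-2}$ (the trigamma series) gives $f'(x)=\frac1x-\frac{1}{2x^2}-\sum_{n=1}^\infty\frac{1}{(n+x)^2}$. Since $t\mapsto(t+x)^{-2}$ is strictly convex and positive, the trapezoidal rule overestimates its integral on each interval $[n,n+1]$; summing yields $\sum_{n=1}^\infty(n+x)^{-2}>\int_1^\infty(t+x)^{-2}\,dt+\tfrac12(1+x)^{-2}=\frac{1}{x+1}+\frac{1}{2(x+1)^2}$. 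Substituting this lower bound into the expression for $f'$ and clearing denominators reduces the inequality $f'(x)<0$ to $0<1$, so $f$ is strictly decreasing on $(0,\infty)$. Finally $\lim_{x\to\infty}f(x)=0$: because $H$ is increasing, for $n\le x\le n+1$ we have $H_n-\log(n+1)\le H(x)-\log(x)\le H_{n+1}-\log(n)$, and both outer terms tend to $\gamma$ by the classical limit $H_n-\log(n)\to\gamma$, so $H(x)-\log(x)\to\gamma$ and hence $f(x)\to 0$. A strictly decreasing function with limit $0$ is positive everywhere, which proves the lemma.

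The main obstacle is the sharpness of the trigamma lower bound: the margin in the reduction of $f'(x)<0$ is razor-thin (after clearing denominators the inequality is literally $0<1$, and the bound $\frac{1}{x+1}+\frac{1}{2(x+1)^2}$ is asymptotically exact), so a cruder estimate such as $\sum_{n\ge 1}(n+x)^{-2}>\int_1^\infty(t+x)^{-2}\,dt=\frac{1}{x+1}$ is not enough — one genuinely needs the extra $\frac12(1+x)^{-2}$ coming from convexity. The only other point requiring care is justifying $\lim_{x\to\infty}f(x)=0$ without circularity, which the squeeze via monotonicity of $H$ and the definition of $\gamma$ handles cleanly.
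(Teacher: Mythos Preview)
Your first paragraph is exactly the paper's proof: cite the Alzer bound $\psi(x)<\log(x)-\tfrac{1}{2x}$, combine with the functional equation $\psi(x+1)=\psi(x)+\tfrac1x$ and the identity $H(x)=\psi(x+1)+\gamma$. Your additional self-contained argument via the trigamma series is correct as well (the algebraic reduction to $0<1$ checks out, and the squeeze for $\lim_{x\to\infty}f(x)=0$ is fine), but the paper does not include anything like it; it simply quotes Alzer and moves on.
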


\begin{proof}
    By (2.2) from \cite{Alzer},
    \begin{equation}
        \psi(x)<\log(x)-\frac{1}{2x}
    \end{equation}
    for all $x\geq 1$. Then we use \eqref{H(x)=psi(x+1)+gamma} and $\psi(x+1)=\psi(x)+\frac 1x$ to finish.
\end{proof}

\begin{lemma}
    For all $x\geq 4$,
    \begin{equation}
        H(x)<\frac{2\log(x)}{1+\frac{6}{\pi^2 x}}.
    \end{equation}
\end{lemma}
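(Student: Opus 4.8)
The plan is to reduce the claimed bound to the cruder estimate already established in \cref{first H(x) bound}. Since $H(x)<\log(x)+\gamma+\tfrac{1}{2x}$ for all $x\ge 1$, and since the denominator $1+\tfrac{6}{\pi^2 x}$ is positive, the desired inequality $H(x)<\tfrac{2\log(x)}{1+6/(\pi^2 x)}$ follows as soon as we show
\begin{equation*}
    \Bigl(\log(x)+\gamma+\tfrac{1}{2x}\Bigr)\Bigl(1+\tfrac{6}{\pi^2 x}\Bigr)<2\log(x)\qquad(x\ge 4).
\end{equation*}

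First I would expand the left-hand side, cancel one copy of $\log(x)$, and move everything to one side, so that the problem becomes showing $g(x)>0$ for $x\ge 4$, where
\begin{equation*}
    g(x):=\log(x)-\gamma-\frac{1}{2x}-\frac{6\log(x)}{\pi^2 x}-\frac{6\gamma}{\pi^2 x}-\frac{3}{\pi^2 x^2}.
\end{equation*}
Next I would check that $g$ is increasing on $[4,\infty)$ by differentiating term by term: $\tfrac{d}{dx}\log(x)=\tfrac1x>0$, $\tfrac{d}{dx}\bigl(-\tfrac{1}{2x}\bigr)=\tfrac{1}{2x^2}>0$, $\tfrac{d}{dx}\bigl(-\tfrac{6\gamma}{\pi^2 x}\bigr)=\tfrac{6\gamma}{\pi^2 x^2}>0$, $\tfrac{d}{dx}\bigl(-\tfrac{3}{\pi^2 x^2}\bigr)=\tfrac{6}{\pi^2 x^3}>0$, and $\tfrac{d}{dx}\bigl(-\tfrac{6\log(x)}{\pi^2 x}\bigr)=\tfrac{6(\log(x)-1)}{\pi^2 x^2}\ge 0$ for $x\ge e$; hence $g'(x)>0$ on $[4,\infty)$. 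Finally I would verify the single numerical inequality $g(4)>0$ (a direct evaluation gives $g(4)\approx 0.367$), which together with the monotonicity yields $g(x)>0$ for all $x\ge 4$ and finishes the proof.

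Every step is either an invocation of \cref{first H(x) bound}, an elementary algebraic rearrangement, a sign check on a sum of manifestly nonnegative derivative terms, or one numerical evaluation, so there is no genuine obstacle here. The only point deserving a moment's care is confirming that the weaker bound of \cref{first H(x) bound} is strong enough all the way down to $x=4$ — it is, with room to spare, as the value $g(4)\approx 0.367$ shows — so that no separate argument for a small intermediate range of $x$ is required.
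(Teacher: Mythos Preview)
Your proof is correct and follows essentially the same approach as the paper: both invoke \cref{first H(x) bound} to reduce to the inequality $\log(x)+\gamma+\tfrac{1}{2x}<\tfrac{2\log(x)}{1+6/(\pi^2 x)}$, rearrange algebraically, verify the inequality at $x=4$ by direct computation, and finish by a monotonicity argument. The only cosmetic difference is that the paper isolates $\log(x)$ on one side and observes that the other side is decreasing, whereas you show the difference $g(x)$ is increasing; these are equivalent manoeuvres.
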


\begin{proof}
    By \ref{first H(x) bound}, it suffices to show that
    \begin{equation}\label{second H(x) bound eq1}
        \log(x)+\gamma+\frac{1}{2x}<\frac{2\log(x)}{1+\frac{6}{\pi^2 x}}
    \end{equation}
    for $x\geq 4$. By arithmetic manipulations, \eqref{second H(x) bound eq1} becomes
    \begin{equation}\label{second H(x) bound eq2}
        \frac{1}{\pi^2 x-6}\left(\gamma\pi^2x+\frac{\pi^2}{2}+6\gamma+\frac 3x\right)<\log(x).
    \end{equation}
    Computation reveals that \eqref{second H(x) bound eq2} holds for $x=4$, and the LHS of \eqref{second H(x) bound eq2} is decreasing while the RHS is increasing, so we obtain the result.
\end{proof}

  \begin{lemma}\label{two estimates}
    The following hold:
    \begin{enumerate}
    \item For all $n>1$, $H_{n+1}\leq\frac{n}{\log(n)}$.
    \item For all $x\geq 4$, $\log(H(x))\leq\frac{x}{2\log(x)}$.
    \end{enumerate}
\end{lemma}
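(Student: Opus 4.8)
The plan is to derive both estimates from the two bounds on $H$ proved just above, reducing each to a one-variable inequality that is settled by evaluation at a single point together with a monotonicity argument.

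\emph{Part (1).} I would apply Lemma \ref{first H(x) bound} at $x=n+1$: since $\gamma+\frac{1}{2(n+1)}<1$ for every $n\ge 1$, this gives $H_{n+1}=H(n+1)<\log(n+1)+1$, so it suffices to prove $(1+\log(n+1))\log n\le n$ for every integer $n\ge 2$. Set $g(x)=x-(1+\log(x+1))\log x$; then $g'(x)=1-\frac{\log x}{x+1}-\frac{1+\log(x+1)}{x}$, and for $x\ge 4$ both subtracted terms are decreasing in $x$ (after differentiating, their numerators are $1+\frac1x-\log x<0$ and $-\frac{1}{x+1}-\log(x+1)<0$). Hence $g'$ is increasing on $[4,\infty)$ with $g'(x)\ge g'(4)>0$, so $g$ is increasing on $[4,\infty)$, and the claim reduces to the finitely many verifications $g(2),g(3),g(4)\ge 0$, which are immediate. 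Because only integer $n$ matter, the fact that $g$ dips below its right-endpoint value somewhere on $(2,4)$ is irrelevant. (One could equally well start from the elementary bound $H_m\le 1+\log m$.)

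\emph{Part (2).} Here I would invoke the preceding lemma: for $x\ge 4$ we have $H(x)<\frac{2\log x}{1+6/(\pi^2x)}<2\log x$, hence $\log(H(x))<\log 2+\log\log x$, and it suffices to show $\log 2+\log\log x\le\frac{x}{2\log x}$ for $x\ge 4$. On the short range $[4,6]$ this follows by sandwiching: $x\mapsto\frac{x}{2\log x}$ is increasing for $x>e$, so it is $\ge\frac{4}{2\log 4}$, while $\log 2+\log\log x$ is increasing, so it is $\le\log 2+\log\log 6$, and $\frac{4}{2\log 4}>\log 2+\log\log 6$. For $x\ge 6$, put $G(x)=\frac{x}{2\log x}-\log 2-\log\log x$; then $G'(x)=\frac{\log x-1}{2(\log x)^2}-\frac{1}{x\log x}$, and $G'(x)>0$ is equivalent to $(x-2)\log x>x$, an inequality that holds at $x=6$ and whose two sides' difference has derivative $\log x-\frac2x>0$ for $x\ge 6$. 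So $G$ is increasing on $[6,\infty)$ with $G(x)\ge G(6)>0$, which finishes the case.

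The only real friction is bookkeeping: in both parts the relevant gap function is not monotone over the whole range of interest (it decreases before it increases), so one must peel off a short initial interval — handled in part (1) by checking the finitely many integers that occur, and in part (2) by a crude sandwich on $[4,6]$ — and only then invoke monotonicity. Beyond the two $H$-bounds already established, no new idea is required.
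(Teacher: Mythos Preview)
Your proof is correct. Part (1) follows essentially the same route as the paper: both reduce to the inequality $(1+\log(n+1))\log n\le n$ via the bound $H_{n+1}\le 1+\log(n+1)$, check a few small cases by hand, and then use a monotonicity argument for the rest. The paper establishes the latter inequality by substituting $t=\log(x+1)$ and studying $e^t-t^2-t-1$, while you work directly with derivatives of $x-(1+\log(x+1))\log x$; this is a cosmetic difference.

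Part (2), however, is handled differently. You invoke the preceding lemma to get $H(x)<2\log x$ and then prove $\log 2+\log\log x\le \frac{x}{2\log x}$ by a separate two-range analysis. The paper instead bootstraps from Part (1): since $n\le x<n+1$ gives $H(x)<H_{n+1}\le \frac{n}{\log n}\le \frac{x}{\log x}$, and since $\log y<\tfrac{y}{2}$ for $y>2$, one immediately obtains $\log(H(x))<\tfrac{1}{2}H(x)\le \frac{x}{2\log x}$. The paper's route is shorter and avoids the extra derivative bookkeeping, and in fact does not need the sharper bound $H(x)<\frac{2\log x}{1+6/(\pi^2 x)}$ at all for this step; your route is more self-contained in that it does not rely on Part (1).
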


\begin{proof}
    (a) We can manually verify the inequality for $n\leq 6$. Noting that
    \begin{equation}
        H_{n+1}=\sum_{k=1}^{n+1}\frac 1k\leq 1+\int_1^{n+1}\frac{\dd t}{t}=1+\log(n+1),
    \end{equation}
    it suffices to show that
    \begin{equation}
        \log(x)(\log(x+1)+1)\leq x.
    \end{equation}
    Put $g(t)=e^t-t^2-t-1$. We see that $g(2)>0$ and that $g'(t)=e^t-2t-1>0$ for $t\geq 2$, so $g(t)>0$ for $t\geq 2$. For $x\geq e^2-1$ we have
    \begin{equation}
        0<g(\log(x+1))=x+1-(\log(x+1))^2-\log(x+1)-1<x-\log(x)(\log(x+1)+1).
    \end{equation}

    (b) For $x\geq 4$, note that the function mapping $x\mapsto \frac{x}{\log(x)}$ is increasing. If $n\leq x<n+1$, then
    \begin{equation}\label{bound on log(H(x)) eq}
        H_n\leq H(x)< H_{n+1}\leq\frac{n}{\log(n)}\leq\frac{x}{\log(x)}.
    \end{equation}
    For $y>2$ we see that $\log(y)<\frac y2$, so let $y=H(x)$ and apply \eqref{bound on log(H(x)) eq} finish.
\end{proof}

\begin{lemma}\label{H(x)log(H(x) bound}
    For $x\geq 4$,
    \begin{equation}
        H(x)\log(H(x))<\frac{x^2}{x+\frac{6}{\pi^2}}.
    \end{equation}
\end{lemma}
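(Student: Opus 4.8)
The plan is to simply multiply the two preceding estimates for $H(x)$. We already have, for $x\geq 4$, the bound $H(x)<\dfrac{2\log(x)}{1+\frac{6}{\pi^2 x}}$ from the second $H(x)$ lemma, and the bound $\log(H(x))\leq\dfrac{x}{2\log(x)}$ from part (b) of \cref{two estimates}. Before multiplying I would record that both factors on the left-hand side are positive: for $x\geq 4$ we have $H(x)\geq H_4=\frac{25}{12}>1$ (using $H(n)=H_n$ and monotonicity of $H$), so $\log(H(x))>0$, and of course $H(x)>0$; likewise $\log(x)>0$ for $x\geq 4$, so both right-hand sides above are positive. This is the only ``care'' point, since multiplying inequalities is only valid when the terms being combined are nonnegative.

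With positivity in hand, from $0<H(x)<\dfrac{2\log(x)}{1+\frac{6}{\pi^2 x}}$ and $0<\log(H(x))\leq\dfrac{x}{2\log(x)}$ one gets
\begin{equation}
    H(x)\log(H(x))<\frac{2\log(x)}{1+\frac{6}{\pi^2 x}}\cdot\frac{x}{2\log(x)}=\frac{x}{1+\frac{6}{\pi^2 x}},
\end{equation}
where the inequality is strict because the first factor bound is strict and the second multiplier is positive.

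Finally I would do the elementary algebraic simplification
\begin{equation}
    \frac{x}{1+\frac{6}{\pi^2 x}}=\frac{x}{\frac{\pi^2 x+6}{\pi^2 x}}=\frac{\pi^2 x^2}{\pi^2 x+6}=\frac{x^2}{x+\frac{6}{\pi^2}},
\end{equation}
which yields exactly the claimed bound $H(x)\log(H(x))<\dfrac{x^2}{x+\frac{6}{\pi^2}}$ for $x\geq 4$.

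\textbf{Main obstacle.} There is essentially no obstacle: the lemma is engineered so that the two earlier bounds telescope into the stated inequality. The only thing one must not overlook is checking $\log(H(x))\geq 0$ on $[4,\infty)$ so that the product of the two inequalities is legitimate; everything else is a one-line computation.
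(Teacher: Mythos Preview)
Your proposal is correct and is essentially the paper's own argument: the paper's one-line proof cites \cref{first H(x) bound} (through its immediate corollary, the unnamed lemma giving $H(x)<\frac{2\log x}{1+6/(\pi^{2}x)}$) together with \cref{two estimates}(b), and multiplies the two bounds just as you do. Your extra check that $\log(H(x))>0$ on $[4,\infty)$ so the multiplication is legitimate is a nice point the paper leaves implicit.
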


\begin{proof}
    Apply \ref{first H(x) bound} and \ref{two estimates}. 
\end{proof}

\begin{lemma}\label{H'(x) bound}
    For $x\geq 4$, 
    \begin{equation}
        H'(x)>\frac{H(x)\log(H(x))}{x^2}.
    \end{equation}
\end{lemma}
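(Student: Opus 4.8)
The plan is to combine an upper bound on $\frac{H(x)\log(H(x))}{x^2}$ with a lower bound on $H'(x)$. By \ref{H(x)log(H(x) bound}, for $x \geq 4$ we have $\frac{H(x)\log(H(x))}{x^2} < \frac{1}{x + 6/\pi^2}$, so it suffices to prove that
\begin{equation*}
    H'(x) > \frac{1}{x + \frac{6}{\pi^2}} \qquad \text{for all } x \geq 4.
\end{equation*}

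To produce a workable lower bound for $H'$, I would differentiate the identity $H(x) = \psi(x+1) + \gamma$ from \eqref{H(x)=psi(x+1)+gamma} and invoke the standard series representation of the trigamma function, $\psi'(y) = \sum_{n \geq 0}(y+n)^{-2}$ — equivalently, one may differentiate the representation $H(x) = \sum_{k\geq 1}\left(\frac1k - \frac{1}{k+x}\right)$ termwise — to get $H'(x) = \sum_{n\geq1}\frac{1}{(x+n)^2}$. The naive telescoping estimate $\sum_{n\geq1}(x+n)^{-2} > \sum_{n\geq1}\left(\frac{1}{x+n} - \frac{1}{x+n+1}\right) = \frac{1}{x+1}$ falls just short of what is needed, since $\frac{6}{\pi^2} < 1$; instead I would exploit the convexity of $t \mapsto (x+t)^{-2}$. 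On each interval $[n, n+1]$ this function lies below its chord, so $\int_n^{n+1}(x+t)^{-2}\,dt \leq \frac12\big((x+n)^{-2} + (x+n+1)^{-2}\big)$; summing over $n \geq 1$ and telescoping the resulting difference yields
\begin{equation*}
    H'(x) = \sum_{n\geq1}\frac{1}{(x+n)^2} \geq \int_1^\infty \frac{dt}{(x+t)^2} + \frac{1}{2(x+1)^2} = \frac{1}{x+1} + \frac{1}{2(x+1)^2}.
\end{equation*}

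It then remains to verify the purely algebraic inequality $\frac{1}{x+1} + \frac{1}{2(x+1)^2} > \frac{1}{x + 6/\pi^2}$ for $x \geq 4$. Setting $z = x+1 \geq 5$ and $\delta = 1 - \frac{6}{\pi^2}$, the left side equals $\frac{2z+1}{2z^2}$ and the right side equals $\frac{1}{z - \delta}$, so after clearing the (positive) denominators the claim becomes $(2z+1)(z - \delta) > 2z^2$, i.e. $z(1 - 2\delta) > \delta$. Since $\pi^2 < 12$ we have $\delta < \frac12$, hence $1 - 2\delta > 0$, and the inequality holds precisely when $z > \frac{\delta}{1 - 2\delta} = \frac{\pi^2 - 6}{12 - \pi^2} \approx 1.82$, which is clear for $z \geq 5$.

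The main obstacle is the second step: one must extract from the series $\sum_{n\geq1}(x+n)^{-2}$ a lower bound that actually beats $\frac{1}{x + 6/\pi^2}$, and because $\frac{6}{\pi^2} \approx 0.608$ is noticeably less than $1$, the crude comparison with $\frac{1}{x+1}$ is not enough — it is exactly the second-order trapezoidal correction $\frac{1}{2(x+1)^2}$ that closes the gap. Near $x = 4$ the margin is fairly tight (the bound gives $0.22$ against a target of about $0.217$), so the convexity estimate is close to best possible for this argument and cannot be substantially weakened.
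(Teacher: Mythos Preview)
Your argument is correct. Both your proof and the paper's share the same first move: invoke \cref{H(x)log(H(x) bound} to reduce the claim to the trigamma estimate $H'(x)=\psi'(x+1)\geq\frac{1}{x+6/\pi^2}$. The difference lies in how that estimate is obtained. The paper simply quotes inequality (51) from \cite{DolatabadiandFarhanddoost}, namely $\frac{1}{\psi'(x)}\leq x+\frac{6}{\pi^2}-1$, which after shifting $x\mapsto x+1$ gives exactly the required bound with no further work; the constant $\frac{6}{\pi^2}$ in \cref{H(x)log(H(x) bound} was engineered to match this citation. You instead derive, from scratch via the trapezoidal rule applied to the series $\psi'(x+1)=\sum_{n\geq 1}(x+n)^{-2}$, the slightly weaker bound $H'(x)\geq\frac{1}{x+1}+\frac{1}{2(x+1)^2}$, and then check by hand that this still dominates $\frac{1}{x+6/\pi^2}$ for $x\geq 4$. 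Your route is longer but fully self-contained and elementary, avoiding the external digamma reference; the paper's route is a one-liner once the citation is granted.
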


\begin{proof}
    We will use (51) from \cite{DolatabadiandFarhanddoost} which states that
    \begin{equation}
        \frac{1}{\psi'(x)}\leq x+\frac{6}{\pi^2}-1
    \end{equation}
    for $x\geq 1$. We calculate
    \begin{equation}
        H'(x)=\psi'(x+1)\geq\frac{1}{x+6\pi^2}>\frac{H(x)\log(H(x))}{x^2},
    \end{equation}
    where the equality follows from taking the derivative of \eqref{H(x)=psi(x+1)+gamma} and the second inequality follows from \ref{H(x)log(H(x) bound}.
\end{proof}

\begin{proposition}\label{f(x) increasing}
    The function
    \begin{equation}
        g(x)=\frac{\exp(H(x))\log(H(x))}{x}
    \end{equation}
    is increasing for $x\geq 4$.
\end{proposition}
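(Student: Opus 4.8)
The plan is to show $g'(x)>0$ for all $x\geq 4$. As noted in the proof of \cref{H'(x) bound}, inequality (51) of \cite{DolatabadiandFarhanddoost} gives $H'(x)=\psi'(x+1)\geq\frac{1}{x+6/\pi^2}>0$, so $H$ is increasing on $[1,\infty)$; combined with $H(4)=H_4=\frac{25}{12}>1$ this gives $H(x)>1$, hence $\log H(x)>0$ and $g(x)>0$, for all $x\geq 4$. It therefore suffices to prove $(\log g)'(x)>0$. Differentiating $\log g(x)=H(x)+\log\log H(x)-\log x$ gives
\[
    (\log g)'(x) = H'(x) + \frac{H'(x)}{H(x)\log H(x)} - \frac 1x = H'(x)\left(1 + \frac{1}{H(x)\log H(x)}\right) - \frac 1x,
\]
so the goal reduces to establishing $H'(x)\bigl(1 + \frac{1}{H(x)\log H(x)}\bigr) > \frac 1x$.

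From here I would feed in the two available estimates: the lower bound $H'(x)\geq\frac{1}{x+6/\pi^2}$ just quoted, and the upper bound $H(x)\log H(x)<\frac{x^2}{x+6/\pi^2}$ of \cref{H(x)log(H(x) bound}. Using the lower bound on $H'$, it is enough to show $\frac{1}{x+6/\pi^2}\bigl(1+\frac{1}{H(x)\log H(x)}\bigr)>\frac 1x$, which, after multiplying through by $x(x+6/\pi^2)>0$, rearranges to the clean inequality $H(x)\log H(x)<\frac{\pi^2}{6}\,x$. By \cref{H(x)log(H(x) bound} this in turn follows once one checks $\frac{x^2}{x+6/\pi^2}<\frac{\pi^2}{6}\,x$, i.e. $6x^2<\pi^2 x\bigl(x+\frac{6}{\pi^2}\bigr)=\pi^2 x^2+6x$, i.e. $(6-\pi^2)x^2<6x$; since $\pi^2>6$ the left side is negative and the right side positive, so this holds for every $x>0$. (Strictness of the final inequality makes $(\log g)'(x)>0$ strict, so $g$ is in fact strictly increasing.)

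I do not expect a genuine obstacle; the argument is a chain of substitutions. The single point requiring care is which estimate of $H'$ to invoke: the bound as stated in \cref{H'(x) bound}, namely $H'(x)>H(x)\log H(x)/x^2$, has a right-hand side that is $o(1/x)$ and so cannot absorb the $-1/x$ term, whereas the intermediate estimate $H'(x)\geq\frac{1}{x+6/\pi^2}$ coming from the digamma inequality in its proof is exactly strong enough, after which everything collapses to the trivial polynomial comparison $(6-\pi^2)x^2<6x$.
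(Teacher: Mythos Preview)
Your proof is correct and is in fact more direct than the paper's. You compute $(\log g)'(x)=H'(x)\bigl(1+\tfrac{1}{H(x)\log H(x)}\bigr)-\tfrac1x$ and feed in the two estimates $H'(x)\geq\tfrac{1}{x+6/\pi^2}$ and $H(x)\log H(x)<\tfrac{x^2}{x+6/\pi^2}$, reducing everything to the trivial polynomial check $(6-\pi^2)x^2<6x$. The paper instead passes through the integral representation $H(x)=\log x+\gamma+\int_x^\infty\tfrac{t-\lfloor t\rfloor}{t^2}\,dt$, writes $g$ as a pointwise limit of functions $g_k(x)=e^\gamma\log(H(x))\exp\bigl(\int_x^k\tfrac{t-\lfloor t\rfloor}{t^2}\,dt\bigr)$, and differentiates those; this trades the $-1/x$ term for a milder $-\tfrac{x-\lfloor x\rfloor}{x^2}\geq -\tfrac{1}{x^2}$ term, so that the weaker statement of \cref{H'(x) bound} suffices. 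Your route avoids the detour through $g_k$ and the real-variable integral identity entirely, at the cost of needing the sharper intermediate bound $H'(x)\geq\tfrac{1}{x+6/\pi^2}$ rather than just the conclusion of \cref{H'(x) bound}; you correctly flag this subtlety in your last paragraph.
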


\begin{proof}
    We start with (3.5) from \cite{Lagarias}:
    \begin{equation}
        \begin{aligned}
            & H_n=\log(n)+\gamma+\int_n^\infty \frac{x-\lfloor x\rfloor}{x^2}\dd x \\
            &\implies \exp(H_n)=e^\gamma n\exp\left(\int_n^\infty\frac{x-\lfloor x\rfloor}{x^2}\dd x\right) \\
            &\implies \frac{\exp(H_n)\log(H_n)}{n}=e^\gamma\log(H_n)\exp\left(\int_n^\infty\frac{x-\lfloor x\rfloor}{x^2}\dd x\right).
        \end{aligned}
    \end{equation}
    Given $k\in\NN$, put
    \begin{equation}
        g_k(x)=e^\gamma\log(H(x))\exp\left(\int_x^k\frac{t-\lfloor t\rfloor}{t^2}\dd t\right)
    \end{equation}
    so that $\lim_{k\to\infty}g_k(x)=g(x)$. We compute
    \begin{equation}
        g_k'(x)=e^\gamma\exp\left(\int_x^k\frac{t-\lfloor t\rfloor}{t^2}\dd t\right)\left(\frac{H'(x)}{H(x)}+\log(H(x))\left(-\frac{x-\lfloor x\rfloor}{x^2}\right)\right),
    \end{equation}
    so $g'_k(x)>0$ if and only if
    \begin{equation}
        \frac{H'(x)}{H(x)}+\log(H(x))\left(-\frac{x-\lfloor x\rfloor}{x^2}\right)\geq \frac{H'(x)}{H(x)}-\frac{\log(H(x))}{x^2}>0,
    \end{equation}
    which is the content of \ref{H'(x) bound}. Thus, $g(x)$ is the limit of monotonically increasing functions and is therefore monotonically increasing.
\end{proof}

\begin{corollary}
    The sequence
    \begin{equation}
        \left\{\frac{\exp(H_n)\log(H_n)}{n}\right\}_{n=1}^\infty
    \end{equation}
    is monotonically increasing.
\end{corollary}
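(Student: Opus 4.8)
The plan is to read this off directly from \cref{f(x) increasing}. Since $H(n)=H_n$ for every $n\in\NN$, the $n$-th term of the sequence equals $g(n)$, where $g(x)=\exp(H(x))\log(H(x))/x$ is the function shown to be increasing on $[4,\infty)$ in that proposition. Hence $g(n)\le g(n+1)$ for every integer $n\ge 4$, which already gives monotonicity of the sequence from the fourth term onward.

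It then remains only to verify the initial segment $n\in\{1,2,3\}$ by hand, since \cref{f(x) increasing} is stated only for $x\ge 4$. Here $H_1=1$, $H_2=\frac32$, $H_3=\frac{11}{6}$, $H_4=\frac{25}{12}$, and one computes $a_1=e\log 1=0$, $a_2=\frac12 e^{3/2}\log(\frac32)$, $a_3=\frac13 e^{11/6}\log(\frac{11}{6})$, $a_4=\frac14 e^{25/12}\log(\frac{25}{12})$, which numerically satisfy $0<0.908<1.264<1.474$, so $a_1<a_2<a_3<a_4$. Concatenating these with $a_n\le a_{n+1}$ for $n\ge 4$ shows $\{a_n\}_{n=1}^\infty$ is monotonically increasing.

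There is no real obstacle: the content sits entirely in \cref{f(x) increasing}, and the corollary needs only the observation that $g$ restricted to the positive integers reproduces the sequence, together with the finitely many base-case evaluations. If one preferred to avoid those computations, one could instead note that $H$ is smooth on $[1,\infty)$ with $H(x)>1$ for $x>1$, so $g$ is smooth on $(1,\infty)$, and try to reprove $g'>0$ on all of $(1,\infty)$; but \cref{H'(x) bound} is only established for $x\ge 4$, so the direct check for $n\le 3$ is the cleaner route and the one I would take.
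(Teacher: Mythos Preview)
Your proof is correct and matches the paper's own argument exactly: invoke \cref{f(x) increasing} for $n\ge 4$ and verify the finitely many cases $n\le 3$ by direct computation. The paper's proof is just the one-line ``\cref{f(x) increasing} gives the result for $n\geq 4$ and we can manually check the smaller cases,'' so you have simply made the manual check explicit.
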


\begin{proof}
    \ref{f(x) increasing} gives the result for $n\geq 4$ and we can manually check the smaller cases.
\end{proof}

\begin{definition}
    A number $n$ is \emph{superabundant} if $\sigma(m)/m<\sigma(n)/n$ for all $m<n$.
\end{definition}

\begin{theorem}\label{LLI theorem}
    If there are counterexamples to the Kaneko-Lagarias inequality, the smallest such counterexample is a superabundant number.
\end{theorem}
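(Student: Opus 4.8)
The plan is to leverage the monotonicity already in hand: by the preceding corollary, the sequence $\{g(n)\}_{n\ge 1}$ with $g(n) := \exp(H_n)\log(H_n)/n$ is monotonically increasing. Let $n_0$ be the smallest counterexample to the Kaneko-Lagarias inequality — this exists by well-ordering as soon as any counterexample does — so that $n_0 > 60$ and $\sigma(n_0) \ge \exp(H_{n_0})\log(H_{n_0})$. I would argue by contradiction: suppose $n_0$ is not superabundant, so there is some $m < n_0$ with $\sigma(m)/m \ge \sigma(n_0)/n_0$. Morally this is the same reduction that sends Robin's inequality to superabundant numbers via monotonicity of $\log\log$, with $g$ playing the role of the comparison function.

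The first step is a short chain of inequalities. From $\sigma(m)/m \ge \sigma(n_0)/n_0$ and $\sigma(n_0) \ge \exp(H_{n_0})\log(H_{n_0})$ we obtain $\sigma(m) \ge \frac{m}{n_0}\exp(H_{n_0})\log(H_{n_0})$; and since $m < n_0$, monotonicity of $g$ gives $\exp(H_m)\log(H_m) = m\, g(m) \le m\, g(n_0) = \frac{m}{n_0}\exp(H_{n_0})\log(H_{n_0})$. Chaining these, $\sigma(m) \ge \exp(H_m)\log(H_m)$. If $m > 60$, this exhibits $m$ as a counterexample to the Kaneko-Lagarias inequality strictly smaller than $n_0$, contradicting minimality — and the theorem follows in that case.

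The case $m \le 60$ is the one point requiring genuine, if still elementary, attention, precisely because the Kaneko-Lagarias inequality is only claimed for $n > 60$, so $\sigma(m) \ge \exp(H_m)\log(H_m)$ is not by itself a contradiction. Here one has $\sigma(n_0)/n_0 \le \sigma(m)/m \le \max_{1 \le j \le 60} \sigma(j)/j = \sigma(60)/60 = 14/5$, which together with $\sigma(n_0) \ge \exp(H_{n_0})\log(H_{n_0})$ forces $g(n_0) \le 14/5$. To contradict this, I would (i) verify by a direct finite computation that the Kaneko-Lagarias inequality holds for all $61 \le n \le N$ for a modest explicit $N$ (say $N = 79$; the tightest comparison in this range is at $n = 72$), so that necessarily $n_0 > N$; and (ii) note that for $n > N$, monotonicity of $g$ and the elementary estimate $H_n > \log n + \gamma$ give $g(n) \ge g(N+1) > e^\gamma \log(\log(N+1) + \gamma) > 14/5$. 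Hence $g(n_0) > 14/5$, a contradiction, so $n_0$ must be superabundant.

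I expect the main obstacle to lie entirely in this last case rather than in the logical skeleton, which is immediate once the monotonicity of $g$ is available: one has to make sure the corollary is invoked in the form valid down to $n = 1$ (since $m$ may be as small as $1$), and one has to choose and check an explicit range $[61, N]$ on which Kaneko-Lagarias is confirmed directly while also confirming that $g$ has already risen above $14/5$ by $n = N+1$.
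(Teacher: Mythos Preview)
Your argument is correct and follows essentially the same route as the paper: assume the least counterexample $n_0$ is not superabundant, pick a witness $m<n_0$ with $\sigma(m)/m\ge\sigma(n_0)/n_0$, and use the monotonicity of $g(n)=\exp(H_n)\log(H_n)/n$ to push the violation down to $m$. The only substantive difference is that you explicitly dispose of the boundary case $m\le 60$ (via the bound $\sigma(n_0)/n_0\le 14/5$ together with a finite check that $g(n_0)>14/5$ once $n_0\ge 80$), whereas the paper's proof passes over this case in silence; your treatment is the more careful of the two.
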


\begin{proof}
    Suppose, for sake of contradiction, that $m$ is the smallest counterexample to the Kaneko-Lagarias inequality and that $m$ is not superabundant. Let $n$ be the greatest superabundant number $<m$. We calculate,
    \begin{equation}
        \frac{\sigma(n)}{n}>\frac{\sigma(m)}{m}\geq\frac{\exp(H_m)\log(H_m)}{m}>\frac{\exp(H_n)\log(H_n)}{n},
    \end{equation}
    so $n<m$ violates the Kaneko-Lagarias inequality: a contradiction.
\end{proof}

\subsection{Connection to Robin's inequality}

\begin{theorem}
    If Robin's inequality holds for some $n\in\NN$, then the Kaneko-Lagarias inequality holds for $n$.
\end{theorem}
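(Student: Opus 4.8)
The plan is to prove the contrapositive-free, direct implication by comparing the two right-hand sides: I will show that $e^{\gamma}n\log(\log(n))\le\exp(H_n)\log(H_n)$ for every $n$ for which Robin's inequality can conceivably hold, and then simply chain this with $\sigma(n)<e^{\gamma}n\log(\log(n))$ to get $\sigma(n)<\exp(H_n)\log(H_n)$, which is the Kaneko-Lagarias inequality for $n$.

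First I would record a harmless reduction: if Robin's inequality holds for $n$, then $e^{\gamma}n\log(\log(n))>\sigma(n)\ge 1>0$, so $\log(\log(n))>0$, i.e. $n\ge 3$. This removes the degenerate small cases where the nested logarithms change sign, and it ensures $H_n\ge H_3=\tfrac{11}{6}>1$, so that $\log(H_n)>0$ as well. Next I would invoke the integral representation $H_n=\log(n)+\gamma+\int_n^\infty\frac{x-\lfloor x\rfloor}{x^2}\,\dd x$, which is equation (3.5) of \cite{Lagarias} and is already written out in the proof of \cref{f(x) increasing}. Since the integrand is nonnegative, this yields $H_n\ge\log(n)+\gamma$. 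Exponentiating gives $\exp(H_n)\ge e^{\gamma}n$, and applying the increasing function $\log$ gives $\log(H_n)\ge\log(\log(n)+\gamma)>\log(\log(n))>0$ (using $\gamma>0$).

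Finally, multiplying the two estimates $\exp(H_n)\ge e^{\gamma}n>0$ and $\log(H_n)>\log(\log(n))>0$ — both sides of each being positive precisely because we reduced to $n\ge 3$ — gives $\exp(H_n)\log(H_n)\ge e^{\gamma}n\log(\log(n))$, and hence $\sigma(n)<e^{\gamma}n\log(\log(n))\le\exp(H_n)\log(H_n)$ as desired. There is no genuine obstacle in this argument; the only point that needs a moment's attention is the sign of $\log(\log(n))$, which is exactly why the preliminary observation that Robin's inequality forces $n\ge 3$ must come before one multiplies the two one-sided bounds. As an alternative to citing (3.5), the inequality $H_n\ge\log(n)+\gamma$ also follows from \eqref{H(x)=psi(x+1)+gamma} together with the standard fact that $\psi(x+1)\ge\log(x)$, but quoting (3.5) directly is the most economical route here.
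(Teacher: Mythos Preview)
Your proof is correct and follows essentially the same route as the paper: both arguments bound $H_n$ from below by $\log(n)+\gamma$ (the paper keeps the extra $\tfrac{1}{2n+1}$ term, but it is not needed) and then compare the two right-hand sides. Your explicit reduction to $n\ge 3$ to secure $\log(\log(n))>0$ is a nice touch that the paper leaves implicit.
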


\begin{proof}
    We use the approximation
\begin{equation}
        H_n\geq\log(n)+\gamma+\frac{1}{2n+1}
    \end{equation}
    to calculate
    \begin{equation}
        \begin{aligned}
            \frac{\exp(H_n)\log(H_n)}{n}\geq \frac{e^{\gamma+\frac{1}{2n+1}}n\log\left(\log(n)+\gamma+\frac{1}{2n+1}\right)}{n}>e^\gamma\log(\log(n)),
        \end{aligned}
    \end{equation}
    which implies the result.
\end{proof}

Note that we obtain the same result for the Lagarias inequality.

\section*{Acknowledgments}

The first author thanks Jeff Lagarias for his comments and the references he provided. The third author thanks Keith Briggs for providing his code used to compute superabundant numbers, Perry Thompson and Owen McAllister for their help implementing it in Rust and Jean-Louis Nicolas for sharing the paper \cite{AxlerNicolas}.We also thank the anonymous referees for their helpful comments, which certainly improved the quality of this paper.

%\bibliographystyle{siam} 
%\bibliography{sample}
\printbibliography

\end{document}